\providecommand{\U}[1]{\protect\rule{.1in}{.1in}}
\newtheorem{theorem}{Theorem}[section]
\newtheorem{corollary}[theorem]{Corollary}
\newtheorem{assumption}[theorem]{Assumption}
\newtheorem{example}[theorem]{Example}
\newtheorem{lemma}[theorem]{Lemma}
\newtheorem{proposition}[theorem]{Proposition}
\newtheorem{remark}[theorem]{Remark}
\numberwithin{equation}{section}
\title{Global Convergence of Successive Approximations for Non-convex Stochastic Optimal Control Problems}
\author[1]{Shaolin Ji
\thanks{(jsl@sdu.edu.cn) Research was partially supported by the National Key R\&D Program of China (No.2023YFA1008701) and the Key Project of the National Natural Science Foundation of China (No.12431017).}
}
\author[2]{Rundong Xu
\thanks{(rdxu@tju.edu.cn) Research was supported by China Postdoctoral Science Foundation (No.2024M760481) and Shanghai Postdoctoral Excellence Program (No.2023201).}}
\affil[1]{\textit{Zhongtai Securities Institute for Financial Studies, Shandong University, Jinan 250100, China.}}
\affil[2]{\textit{Center for Applied Mathematics, Tianjin University, Tianjin 300072, China.}}
\date{}
\newenvironment{breakablealgorithm}
  {% \begin{breakablealgorithm}
   \begin{center}
     \refstepcounter{algorithm}% New algorithm
     \hrule height.8pt depth0pt \kern2pt% \@fs@pre for \@fs@ruled
     \renewcommand{\caption}[2][\relax]{% Make a new \caption
       {\raggedright\textbf{\ALG@name~\thealgorithm} ##2\par}%
       \ifx\relax##1\relax % #1 is \relax
         \addcontentsline{loa}{algorithm}{\protect\numberline{\thealgorithm}##2}%
       \else % #1 is not \relax
         \addcontentsline{loa}{algorithm}{\protect\numberline{\thealgorithm}##1}%
       \fi
       \kern2pt\hrule\kern2pt
     }
  }{% \end{breakablealgorithm}
     \kern2pt\hrule\relax% \@fs@post for \@fs@ruled
   \end{center}
  }
\begin{document}

\maketitle

%\newcommand\blfootnote[1]
%{
%\begingroup
%\renewcommand\thefootnote{}\footnote{#1}
%\addtocounter{footnote}{-1}
%\endgroup
%}
%\blfootnote{}

\textbf{Abstract:} This paper focuses on finding approximate solutions to stochastic optimal control problems with control domains being not necessarily convex, 
where the state trajectory is subject to controlled stochastic differential equations.
The control-dependent diffusions make the traditional method of successive approximations (MSA) insufficient to reduce the value of cost functional in each iteration. 
Without adding extra terms over which to perform the Hamiltonian minimization, the MSA becomes sufficient by our novel error estimate involving a higher order backward adjoint equation.
Under certain convexity assumptions on the coefficients (no convexity assumptions on the control domains), the value of the cost functional descends to the global minimum as the number of iterations tends to infinity. 
In particular, a convergence rate is available for a class of generalized linear-quadratic systems.

\vspace{0.2cm}

{\textbf{Key words:}} method of successive approximations; non-convex; stochastic maximum principle; global convergence; iterative algorithm

\vspace{0.2cm}

\textbf{MSC2020 subject classification:} 93E20, 60H10, 60H30, 49M05

\addcontentsline{toc}{section}{\hspace*{1.8em}Abstract}

\section{Introduction}
In many numerical algorithms for solving optimal control problems (see \cite{Carmona-2021, Dong-2024, Eweinan-Hanjiequn-2017, Hu-Kazeykina-Ren-2019, BDL-Howard-2020, BDL-MSA-2020, Reisinger-Stockinger-Zhang-2023, Siska-Szpruch-SICON} and references therein), the method of successive approximations (MSA) is an efficient iterative scheme based on the Pontryagin optimality principle (also known as the maximum principle). 
This method involves successive integrations of the forward state equations and the backward adjoint equations, and updates the control variables by minimizing the Hamiltonian statically.
However, the systematic understanding of the MSA for stochastic optimal control problems remains incomplete, particularly when dealing with control-dependent diffusions and non-convex control domains, including simple cases like $\{0,1\}$. 
These ubiquitous settings can lead to two main issues. First, the traditional MSA algorithm, which involves finding a set with small measure to make an admissible control vary within a finite range, may not be sufficient to reduce the value of the cost functional in each algorithm iteration. 
Second, the reference maximum principle on which the modified MSA algorithms are based is often weaker than the usual one (see \cite{BDL-MSA-2020}, Theorem 2.4; \cite{Ji-Xu-MSA}, Lemma 2.7). 
This latter issue, in particular, can cause the cost functional to descend to a local minimum rather than a global minimum as the number of algorithm iterations increases.

To tackle these issues and intuitionally compare our main results with related literature,
we consider a stochastic optimal control problem with the standard cost functional
\begin{equation}
  \label{intro-cost-func}
  J(u(\cdot)) := \mathbb{E} \left[ \Phi(X^{u}(T)) + \int_{0}^{T} f(t, X^{u}(t), u(t)) \mathrm{d}t \right],
\end{equation}
being subject to a controlled stochastic differential equation (SDE)
\begin{equation}
  \label{intro-state-eq}
  \left\{
    \begin{array}
    [c]{rl}%
    \mathrm{d}X^{u}(t)= & b(t,X^{u}(t),u(t))\mathrm{d}t + \sigma(t,X^{u}(t),u(t)) \mathrm{d}W(t), \quad t \in [0,T],\\
    X^{u}(0) = & x_{0},
    \end{array}
  \right.
\end{equation}
with deterministic coefficients $b$, $\sigma$, $f$, $\Phi$ in suitable dimensions and a standard Brownian motion $W$ (maybe multi-dimensional) defined on a probability space $(\Omega, \mathcal{F}, \mathbb{P})$. 
(Throughout our work, we only consider the case that $\sigma$ depends on $u$.)
The objective is to minimize $J$ over $u(\cdot) \in \mathcal{U}[0,T]$, which is the collection of all admissible controls taking values in a nonempty set $U \subset \mathbb{R}^{k}$ (not necessarily convex).
Define the Hamiltonian as
\[
H(t,x,p,q,u) = p \cdot b(t,x,u) + q \cdot \sigma(t,x,u) + f(t,x,u).
\]
The adjoint equation of (\ref{intro-state-eq}) is then given by the backward stochastic differential equation (BSDE)
\begin{equation}
  \label{intro-1st-adj-eq}
  \left\{
    \begin{array}
    [c]{rl}%
    \mathrm{d}p^{u}(t)= & -H_{x}(t,X^{u}(t),p^{u}(t), q^{u}(t), u(t)) \mathrm{d}t + q^{u}(t) \mathrm{d}W(t), \quad t \in [0,T],\\
    p^{u}(T) = & \Phi_{x}(X^{u}(T)).
    \end{array}
  \right.
\end{equation}
Particularly, when $\sigma = 0$, (\ref{intro-1st-adj-eq}) degenerates into an ordinary differential equation and hence $q^{u}(t) \equiv 0$ for $t \in [0,T]$.
When $U$ is convex, the maximum principle stipulates that if $\overline{u}(\cdot)$ is an optimal control, and $(\overline{X}(\cdot), \overline{p}(\cdot), \overline{q}(\cdot))$
are the corresponding solutions to (\ref{intro-state-eq})-(\ref{intro-1st-adj-eq}) then we have
\begin{equation}
  \label{intro-H-MP}
  H(t,\overline{X}(t), \overline{p}(t), \overline{q}(t), \overline{u}(t)) = \inf\limits _{u \in U} H(t,\overline{X}(t), \overline{p}(t), \overline{q}(t), u)
\end{equation}
for almost all $(t,\omega) \in [0,T] \times \Omega$.
The underlying mechanism motivating each iteration in the MSA algorithm originates from the error estimate
\begin{equation}
  \label{intro-classical-error-est}
  J(u^{\prime}(\cdot)) - J(u(\cdot)) \leqslant
  \left\{
    \begin{array}[c]{lr}
       \int_{0}^{T} \Delta_{u} H(t) \mathrm{d}t + C \left( \int_{0}^{T} | u^{\prime}(t) - u(t) | \mathrm{d}t \right) ^{2}, & \sigma = 0, \\
       \mathbb{E} \left[\int_{0}^{T} \Delta_{u} H(t) \mathrm{d}t \right] + C \mathbb{E} \left[ \int_{0}^{T} \left\vert u^{\prime}(t) - u(t) \right\vert ^{2} \mathrm{d}t \right], & \sigma \neq 0,
    \end{array}
  \right.
\end{equation}
for any $u(\cdot), u^{\prime}(\cdot) \in \mathcal{U}[0,T]$, and some constant $C>0$ independent of $u(\cdot)$ and $u^{\prime}(\cdot)$, where
\[
\Delta_{u} H(t) = H(t, X^{u}(t), p^{u}(t), q^{u}(t) ,u^{\prime}(t)) - H(t, X^{u}(t), p^{u}(t), q^{u}(t) ,u(t)),
\]
provided that $b$, $\sigma$, $f$, $\Phi$, and their higher order derivatives satisfy appropriate assumptions (e.g., see \cite{BDL-MSA-2020,AAL-1982,Sethi-Siska-MSA}).
If $u(\cdot)$ violates (\ref{intro-H-MP}) and $v(t)$ is any admissible control found from the minimization condition
\begin{equation}
  \label{intro-H-mini}
  H(t,X^{u}(t),p^{u}(t),q^{u}(t),v) \longrightarrow \underset{v \in U}{\min}, \quad t \in [0,T],
\end{equation}
then, with $u^{\prime}(\cdot) = v(\cdot)$, the measure of the set $\{ t \in [0,T]: \mathbb{E} [\Delta_{u} H(t)] < 0 \}$ is strictly positive. For any $\tau$ in this set, take $u^{\prime}(\cdot)$ to be
$u_{\tau \varepsilon}(\cdot) = v(\cdot) 1_{[\tau - \varepsilon, \tau + \varepsilon]}(\cdot) + u(\cdot) 1_{[0,T] \setminus [\tau - \varepsilon, \tau + \varepsilon]}(\cdot)$ with $\varepsilon > 0$.
It follows from (\ref{intro-classical-error-est}) and the Lebesgue differentiation theorem that there is a constant $C_{u,\tau}>0$, depending only on $u(\cdot)$ and $\tau$, such that
\begin{equation}
  \label{intro-classical-J-reduce}
  J(u_{\tau \varepsilon}(\cdot)) - J(u(\cdot)) \leqslant
  \left\{
    \begin{array}[c]{lr}
      \underset{\leqslant -C_{u, \tau} \varepsilon < 0}{\underbrace{\int_{\tau - \varepsilon}^{\tau + \varepsilon} \Delta_{u} H(t) \mathrm{d}t}} + C \varepsilon^{2}, & \sigma = 0, \\
      \underset{\leqslant -C_{u, \tau} \varepsilon < 0}{\underbrace{\int_{\tau - \varepsilon}^{\tau + \varepsilon} \mathbb{E} \left[ \Delta_{u} H(t) \right] \mathrm{d}t}} + C \varepsilon, & \sigma \neq 0.
    \end{array}
  \right.
\end{equation}
The MSA for seeking numerical solutions to deterministic control systems, corresponding to $\sigma = 0$, was first proposed by Krylov et al. \cite{IA-Krylov1}. 
After that, many improved modifications of the MSA have been developed for a variety of deterministic control systems \cite{AAL2, IA-Krylov2, WeinanE-2018, AAL1} in the past few decades.
In this case, the remainder $C\varepsilon^{2}$ in (\ref{intro-classical-J-reduce}) is too negligible to prevent $J(u(\cdot))$ from decreasing when $\varepsilon$ is sufficiently small.

Nevertheless, for the stochastic case ($\sigma \neq 0$), it may fail to reduce $J(u(\cdot))$ by (\ref{intro-classical-J-reduce}) since the impact of the remainder $C\varepsilon$ cannot be ignored.
%According to the error estimate developed in the pioneering work \cite{BDL-MSA-2020} (Lemma 2.3) and subsequently improved in 
%a recently significant progress \cite{Sethi-Siska-MSA} (Lemma 1.13) being made by the second author of \cite{BDL-MSA-2020} and his collaborator, (\ref{intro-classical-error-est}) becomes
%\begin{equation}
%  \label{intro-classical-error-est}
%  J(u^{\prime}(\cdot)) - J(u(\cdot)) \leqslant \mathbb{E} \left[\int_{0}^{T} \Delta_{u} H(t) \mathrm{d}t \right] + C \mathbb{E} \left[ \int_{0}^{T} \left\vert u^{\prime}(t) - u(t) \right\vert ^{2} \mathrm{d}t \right],
%\end{equation}
%under appropriate assumptions on $\psi = b, \sigma, f, \Phi$.
%Following the same approach as above, one can use (\ref{intro-classical-error-est}) to obtain a stochastic counterpart of (\ref{intro-classical-J-reduce}), that is,
%\begin{equation}
%  \label{intro-classical-J-reduce}
%  J(u_{\tau \varepsilon}(\cdot)) - J(u(\cdot)) \leqslant \underset{\leqslant -C_{u, \tau} \varepsilon < 0}{\underbrace{\int_{\tau - \varepsilon}^{\tau + \varepsilon} \mathbb{E} \left[ \Delta_{u} H(t) \right] \mathrm{d}t}} + C \varepsilon
%\end{equation}
%for some $C_{u,\tau}>0$ depending only on $u(\cdot)$ and $\tau$.
%As the sign of the right-hand-side on (\ref{intro-classical-J-reduce}) is unknown, one cannot utilize (\ref{intro-classical-J-reduce}) to reduce $J(u(\cdot))$ efficiently, no matter how $\tau$ is chosen and how $\varepsilon$ is sufficiently small.
To overcome this problem, Kerimkulov et al. established a modified MSA in the pioneering work \cite{BDL-MSA-2020}, where they directly take $u^{\prime}(\cdot) = v^{\prime}(\cdot)$ in (\ref{intro-classical-error-est}) such that $v^{\prime}(t)$ is any admissible control found from the minimization of an augmented Hamiltonian
(Here, for the convenience of presentation, we adopt the version that is subsequently improved in \cite{Sethi-Siska-MSA}.)
\begin{equation}
  \label{intro-extended-H-mini}
  H(t,X^{u}(t),p^{u}(t),q^{u}(t),v) + \frac{\rho}{2} \left\vert v - u(t) \right\vert ^{2} \longrightarrow \underset{v \in U}{\min}, \quad t \in [0,T].
\end{equation}
Hence, for sufficiently large $\rho > 2C$, the second line in (\ref{intro-classical-error-est}) implies
\[
J(v^{\prime}(\cdot)) - J(u(\cdot)) \leqslant \left( 1 - \frac{2C}{\rho} \right) \mathbb{E} \left[\int_{0}^{T} \Delta_{u} H(t) \mathrm{d}t \right],
\]
which is strictly negative as long as $u(\cdot)$ violates (\ref{intro-H-MP}).
Based on this error estimate, their convergence result shows the approximate controls $\left\{ u^{m}(\cdot) \right\}_{m \in \mathbb{N}}$ with $(X^{m},p^{m},q^{m})$ produced by the MSA algorithm approximately satisfy (\ref{intro-extended-H-mini}) as $m \rightarrow \infty$. 
In addition, it is proved that if (\ref{intro-H-MP}) is a sufficient condition for optimality and if it has a structure that separates $(x,u)$-dependence of $b$, $\sigma$, and $f$, then the modified MSA converges at rate of $1/m$.
Inspired by this methodology, the second author of \cite{BDL-MSA-2020} and his collaborator recently made a remarkable progress \cite{Sethi-Siska-MSA} by proposing a gradient flow system. 
They demonstrated that the iterates of the corresponding modified MSA, when appropriately interpolated, converge to this gradient flow system at a rate of $\rho^{-1}$.
Moreover, under the convexity assumptions which ensure that (\ref{intro-H-MP}) is a sufficient condition, they proved that $J(u_{S}(\cdot)) \rightarrow \inf_{u(\cdot) \in \mathcal{U}[0,T]} J(u(\cdot))$ with rate $1/S$, where $S$ is the gradient flow time.
In our recent work \cite{Ji-Xu-MSA}, we extend such a modified MSA to recursive stochastic optimal control problems such that the cost functional is $J(\cdot) := Y^{u}(0)$, where $Y^{u}$ satisfies the BSDE
\[
  \left\{
    \begin{array}
    [c]{rl}%
    \mathrm{d}Y^{u}(t)= & -f(t,X^{u}(t),Y^{u}(t), Z^{u}(t), u(t)) \mathrm{d}t + q^{u}(t) \mathrm{d}W(t), \quad t \in [0,T],\\
    Y^{u}(T) = & \Phi(X^{u}(T)).
    \end{array}
  \right.
\]
The recursive type of $J$ is closely related to the recursive differential utility in economics \cite{Duffie-Epstein-1992} and degenerates into (\ref{intro-cost-func}) when $f$ is independent of $(y,z)$.

As (\ref{intro-extended-H-mini}) is weaker than (\ref{intro-H-mini}) with $\rho$ becoming larger,
adding the extra quadratic term in (\ref{intro-extended-H-mini}) results in $\left\{ u^{m}(\cdot) \right\}_{m \in \mathbb{N}}$ with $(X^{m},p^{m},q^{m})$ unlikely approximating (\ref{intro-H-MP}) as $m \rightarrow \infty$.
Consequently, the modified MSA algorithm may converge to a local minimum of $J$, for example, as Theorem 2.5 in \cite{BDL-MSA-2020} stated.
When $U$ is non-convex, it will be seen from Example \ref{sec-3-example} that the performance of modified MSA is sensitive to the choice of $\rho$.
Hence, to some extent, this modification seems not to be natural and it leads to a question whether it is possible to straightforwardly pursue a more efficient error estimate than (\ref{intro-classical-J-reduce}) for $\sigma \neq 0$,
ensuring that $J$ is decreasing with sufficiently small $\varepsilon$.

The major contribution of this paper provides a positive answer to the previous question by developing a new error estimate
\begin{align}
  \label{intro-stoch-nonconvex-J-reduce}
  J(u_{\tau \varepsilon}(\cdot)) - J(u(\cdot)) \leqslant & \int_{\tau - \varepsilon}^{\tau + \varepsilon} \mathbb{E} \left[ \mathcal{H}(t,X^{u}(t),p^{u}(t),q^{u}(t),P^{u}(t),v(t),u(t))  \right. \\
\nonumber  & - \left. \mathcal{H}(t,X^{u}(t),p^{u}(t),q^{u}(t),P^{u}(t),u(t),u(t))  \right] \mathrm{d}t + C \varepsilon^{\frac{3}{2}}
\end{align}
sharper than (\ref{intro-classical-J-reduce}) for $\sigma \neq 0$, where the $\mathcal{H}$-function is defined by (\ref{def-H-function}) and $P^{u}$ is the unique solution to the second-order adjoint equation (see (\ref{2nd-adj-eq}) in Section 2).
The introduction of $\mathcal{H}$ and $P^{u}$ are natural but essential to reducing $J$, due to their necessity in the expression of the general stochastic maximum principle (see \cite{Peng90}, Theorem 3; \cite{YongZhou}, Theorem 3.2)
and their significant role in lifting the remainder $C\varepsilon$ in (\ref{intro-classical-J-reduce}) up to $C\varepsilon^{3/2}$.
Employing (\ref{intro-stoch-nonconvex-J-reduce}), we establish a MSA algorithm and demonstrate that, for any $\delta>0$, there exists a positive integer $N_{\delta}$ such that if $m \geqslant N_{\delta}$ then $u^{m}(\cdot) $ minimizes the $\mathcal{H}$-function in terms of
\begin{equation}
\begin{array}
[c]{l}%
\mathbb{E}\left[  \int_{0}^{T}\mathcal{H}(t,X^{m}(t),p^{m}(t),q^{m}%
(t),P^{m}(t),u^{m}(t),u^{m}(t))\mathrm{d}t\right]  \\
\leqslant\inf\limits_{u(\cdot) \in \mathcal{U}[0,T]}\mathbb{E}\left[  \int_{0}^{T}\mathcal{H}(t,X^{m}(t),p^{m}(t),q^{m}(t),P^{m}(t),u(t),u^{m}(t))\mathrm{d}t\right] +\delta.
\end{array}
\label{intro-H-delta-minimize}
\end{equation} 
Furthermore, with some mild and convexity assumptions imposed on the coefficients (there is no convexity condition on $U$), (\ref{intro-H-delta-minimize}) is sufficient to obtain
\begin{equation}
\label{intro-near-optimal-ineq}
0 \leqslant J(u^{m}(\cdot))-J(\overline{u}(\cdot)) \leqslant C\delta^{\frac{1}{2}}
\end{equation}
with a positive constant $C$ independent of $\delta$ and $m$. Note that $J(\overline{u}(\cdot))$ can be replaced with $\inf_{u(\cdot) \in \mathcal{U}[0,T]}J(u(\cdot))$ if no optimal control exists. In this case, the $u^{m}(\cdot)$ satisfies (\ref{intro-near-optimal-ineq}) is called a $\delta^{\frac{1}{2}}$-optimal control that has been studied in \cite{Zhou-XY}.
Thus (\ref{intro-near-optimal-ineq}) also provides an approach to find a near-optimal control, and illustrates that near-optimal controls are indeed more available than exact optimal ones as mentioned in \cite{Zhou-XY}.
To meet the requirement of reducing the costs of computation in practice, obtaining (\ref{intro-near-optimal-ineq}) is not enough so
the convergence rate of $J(u^{m}(\cdot))$ descending to $J(\overline{u}(\cdot))$ is available as another contribution. 
Compared with the case $\sigma = 0$, where researchers obtained a $m^{-1}$-order convergence rate for the linear system with convex cost functionals in the Meyer type (e.g., see \cite{AAL1}, Theorem 2),
we obtain a $m^{-\frac{1}{2}}$-order convergence rate for a class of linear-quadratic systems as a special case of the stochastic control systems studied in this paper.

The outline of this paper is as follows. In section 2, we set up some preliminaries and formulate our problem. 
In section 3, we give a rigorous proof of (\ref{intro-stoch-nonconvex-J-reduce}) and use our MSA to find near-optimal controls and study the convergence rate.
A heuristic example (Example \ref{sec-3-example}) is given to illustrate our main result and to compare with the existing modified MSA.

\section{Preliminaries}

\subsection{Basic settings}
We start with fixing some general notation, vector spaces, and stochastic processes that will be used in the
sequel. The scalar product (resp. norm) of two real matrices $A$, $B$ is denoted by
$\left\langle A,B\right\rangle := \mathrm{tr}\{AB^{\intercal}\}$ (resp.
$\left\vert A\right\vert := \sqrt{\mathrm{tr}\left\{  AA^{\intercal}\right\}  }$), where the superscript $^{\intercal}$ denotes the transpose of vectors or
matrices. Denote by $I_{n}$ the $n \times n$ identity matrix.
Let $T \in (0,+\infty)$ be a finite time horizon, $\mathrm{Leb}_{[0,T]}$ be the Lebesgue measure on $[0,T]$,
and $(\Omega,\mathcal{F},\mathbb{P})$ be a complete probability space
on which a standard $d$-dimensional Brownian motion $\{ W=(W^{1}(t),W^{2}(t),\ldots, W^{d}(t))^{\intercal} : t \in [0,T] \}$ is defined.
Denote $\mathbb{F} := \left\{  \mathcal{F}_{t}: t \in [0,T] \right\}$ the $\mathbb{P}$-augmentation of the natural filtration generated by $W$.
Particularly in this paper, for any measurable and bounded function $f$ defined on (or on any subset of) any Euclidean space, we simply denote the bound of $f$ by $\sup |f|$.
For any $p,q \geqslant 1$, $n \in \mathbb{N}_{+}$, we introduce the following spaces and notation.

\begin{itemize}
  \item $L_{\mathcal{F}_{T}}^{p}(\Omega;\mathbb{R}^{n})$: the space of $\mathcal{F}_{T}$-measurable, $\mathbb{R}^{n}$-valued random variables $\xi$ such that
$$
\left\Vert \xi \right\Vert _{L^{p}} := \left(\mathbb{E}\left[  \left\vert \xi\right\vert ^{p}\right]\right)^{\frac{1}{p}}  < +\infty.
$$

\item $\mathcal{M}_{\mathbb{F}}^{p,q}([0,T];\mathbb{R}^{n})$: the space of
$\mathbb{F}$-adapted, $\mathbb{R}^{n}$-valued processes $\varphi(\cdot)$ on
$[0,T]$ such that
\[
\left\Vert \varphi(\cdot)\right\Vert _{\mathcal{M}^{p,q}}:= \left\Vert \left( \int_{0}^{T}\left\vert \varphi(t) \right\vert ^{p} \mathrm{d}t \right)  ^{\frac{1}{p}} \right\Vert _{L^{q}} < +\infty.
\]
In particular, we denote by $\mathcal{M}_{\mathbb{F}}^{p}([0,T];\mathbb{R}^{n})$ the above space when $p=q$.

\item $\mathcal{S}_{\mathbb{F}}^{p}([0,T];\mathbb{R}^{n})$: the space of continuous
processes $\varphi(\cdot)\in$ $\mathcal{M}_{\mathbb{F}}^{p}([0,T];\mathbb{R}%
^{n})$ such that
\[
\left\Vert \varphi \right\Vert _{\mathcal{S}^{p}} := \left\Vert \sup\limits_{t\in[0,T]}\left\vert \varphi(t)\right\vert \right\Vert _{L^{p}} < +\infty.
\]
\end{itemize}

\subsection{Problem Formulation}
Let $U$ be a nonempty, compact subset of $\mathbb{R}^{k}$.
The objective is to minimize
\begin{equation}
J(u(\cdot)):=\mathbb{E}\left[ \Phi(X^{u}(T))+\int_{0}^{T} f(t,X^{u}(t),u(t))\mathrm{d}t \right]
\label{cost-func}%
\end{equation}
over the set of $U$-valued admissible controls
\begin{equation}
\mathcal{U}[0,T]:=\{u(\cdot): [0,T] \times \Omega \mapsto U |\ u(\cdot) \text{ is } \mathbb{F} \text{-progressively measurable.} \},
\label{control-integrable}%
\end{equation}
where the state trajectory $X^{u}(\cdot)$ is subject to a controlled stochastic differential equation
\begin{equation}
\left\{
\begin{array}
[c]{rl}%
dX^{u}(t)= & b(t,X^{u}(t),u(t))\mathrm{d}t+\sigma(t,X^{u}(t),u(t))\mathrm{d}W(t),\\
X^{u}(0)= & x_{0},
\end{array}
\right.  \label{state-eq}%
\end{equation}
with a initial value $x_{0} \in\mathbb{R}^{n}$, and $b:[0,T]\times
\mathbb{R}^{n}\times U\longmapsto\mathbb{R}^{n}$, $\sigma:[0,T]\times
\mathbb{R}^{n}\times U\longmapsto\mathbb{R}^{n\times d}$, $f:[0,T]\times
\mathbb{R}^{n}\times U\longmapsto
\mathbb{R}$, and $\Phi:\mathbb{R}^{n}\longmapsto\mathbb{R}$ being deterministic, measurable functions.
Unless indicated, we always assume that $\sigma$ depends on $u$ throughout this paper.
Suppose that there exists at least one $u(\cdot) \in \mathcal{U}[0,T]$ minimizing (\ref{cost-func}) over $\mathcal{U}[0,T]$.
We impose the following assumptions on the coefficients of (\ref{state-eq}).
\begin{assumption}
\label{assum-1}
(i) $b$, $\sigma$, $f$, $\Phi$ are twice continuously differentiable with respect to $x$; 
$b$, $\sigma$, $f$, $b_{x}$, $\sigma_{x}$, $f_{x}$, $b_{xx}$, $\sigma_{xx}$, $f_{xx}$, are jointly continuous in $\left( t,x \right)$.

(ii) $b_{x}$, $\sigma_{x}$, $b_{xx}$, $\sigma_{xx}$, $f_{xx}$, $\Phi_{xx}$ are bounded.

(iii) There exists a constant $L>0$ such that, for any $(t,x,u) \in [0,T] \times \mathbb{R}^{n} \times U$,
\begin{align*}
  \left\vert b(t,x,u) \right\vert + \left\vert \sigma(t,x,u) \right\vert \leqslant & \ L\left(  1 + \left\vert x\right\vert +\left\vert u\right\vert \right) ; \\
   \left\vert f(t,x,u) \right\vert \leqslant L\left(  1 + \left\vert x \right\vert ^{2} +\left\vert u \right\vert ^{2} \right); \ & \ \left\vert \Phi(x) \right\vert \leqslant L\left(  1 + \left\vert x \right\vert ^{2} \right).
\end{align*}

(iv) $b_{xx}$, $\sigma_{xx}$, $f_{xx}$, $\Phi_{xx}$ are $L$-Lipschitz continuous in $x$, uniformly in $(t,u)$, i.e.
\[
\left\vert \psi_{xx}(t,x_{1},u)-\psi_{xx}(t,x_{2},u)\right\vert \leqslant L\left\vert x_{1}-x_{2}\right\vert ,\text{ }\forall (t,u)\in\lbrack0,T]\times U,\text{ }x_{1},x_{2}\in\mathbb{R}^{n},
\]
where $\psi=b,\sigma,f,\Phi$.
\end{assumption}

\begin{remark}
Since $U$ is compact, it can be deduced from the above assumption that
$b$, $\sigma$, $f_{x}$ are bounded by $\tilde{L}\left(  1+\left\vert x\right\vert \right)  $ for some $\tilde{L}>0$ and $f(t,0,u)$ is uniformly bounded for all $(t,u) \in [0,T] \times U$.
\end{remark}

Put $\alpha=\sup\left\{\left\vert u \right\vert : u\in U \right\}$.
The following lemma provides a well-posedness result and a $L^{p}$-estimate of the controlled SDE (\ref{state-eq}).

\begin{lemma}
\label{lem-est-state-eq} Let Assumption \ref{assum-1} hold. Then, for any given
$u(\cdot)\in\mathcal{U}[0,T]$, (\ref{state-eq}) admits a unique strong solution $X^{u}(\cdot)$.
Moreover, $X^{u}(\cdot)$ is bounded in $\mathcal{S}_{\mathcal{F}
}^{8}([0,T];\mathbb{R}^{n})$ uniformly across all $u(\cdot)\in\mathcal{U}[0,T]$, i.e.
\begin{equation}
\label{Xu-uniform-bounded}
\sup_{u(\cdot)\in\mathcal{U}[0,T]}\mathbb{E}\left[  \sup\limits_{t\in
\lbrack0,T]}\left\vert X^{u}(t)\right\vert ^{8}\right]  \leqslant C,
\end{equation}
where $C$ depends only on $n$, $d$, $T$, $\alpha$, $L$, $x_{0}$, $\sup | b_{x} |$, $\sup | \sigma_{x} |$.
\end{lemma}

\begin{proof}
For any $u(\cdot) \in \mathcal{U}[0,T]$, under Assumption \ref{assum-1},
there exists a $\mathbb{F}$-progressively measurable process $X^{u}(\cdot)$ being unique strong solution to the SDE in (\ref{state-eq}) (\cite{Carmona}, Theorem 1.2; \cite{YongZhou}, Chapter I, Theorem 6.3), which satisfies the following standard estimate:
\begin{equation}
\left\Vert X^{u} \right\Vert _{\mathcal{S}^{8}}^{8}
\leqslant C \left(  \left\vert x_{0}\right\vert ^{8} + \left\Vert b(\cdot,0,u(\cdot)) \right\Vert _{\mathcal{M}^{1,8}}^{8} + \left\Vert \sigma(\cdot,0,u(\cdot)) \right\Vert _{\mathcal{M}^{2,8}}^{8} \right) .
\label{est-forward-state-eq}
\end{equation}
Then, under Assumption \ref{assum-1}, (\ref{Xu-uniform-bounded}) follows from (\ref{est-forward-state-eq}) and the boundedness of $U$.
\end{proof}

For the convenience of introducing the adjoint equations of (\ref{state-eq}), we set
\[
\begin{array}
[c]{l}%
b(\cdot)=\left(  b^{1}(\cdot),b^{2}(\cdot),\ldots,b_{{}}^{n}(\cdot)\right)
^{\intercal}\in\mathbb{R}^{n},\\
\sigma(\cdot)=\left(  \sigma^{1}(\cdot),\sigma^{2}(\cdot),\ldots,\sigma
^{d}(\cdot)\right)  \in\mathbb{R}^{n\times d},\\
\sigma^{i}(\cdot)=\left(  \sigma^{1i}(\cdot),\sigma^{2i}(\cdot),\ldots
,\sigma_{{}}^{ni}(\cdot)\right)  ^{\intercal}\in\mathbb{R}^{n},i=1,2,\ldots,d.
\end{array}
\]
In addition, for $\psi=b$, $\sigma$, $f$, we simply denote
\begin{equation}%
\begin{array}
[c]{lll}%
\psi^{u}(t)=\psi(t,X^{u}(t),u(t)), & \psi_{x}^{u}(t)=\psi_{x}(t,X^{u}(t),u(t)), & \psi_{xx}^{u}(t)=\psi_{xx}(t,X^{u}(t),u(t)),
\end{array}
\label{def-notation-1}%
\end{equation}
and denote 
\[
\begin{array}
[c]{ll}
\sigma_{x}^{u,i}(t)=\sigma_{x}^{i}(t,X^{u}(t),u(t)), & \sigma_{xx}^{u,i}(t)=\sigma_{xx}^{i}(t,X^{u}(t),u(t)), \ i=1,\ldots,d.
\end{array}
\]
%Let $\overline{u}(\cdot)$ be an optimal control, $\overline{X}(\cdot)$ be the optimal state trajectory.
%Then, in notation (\ref{def-notation-1}), we simply write \textquotedblleft$\psi
%(t)$\textquotedblright, \textquotedblleft$\psi_{x}(t)$\textquotedblright,
%\textquotedblleft$\psi_{xx}(t)$\textquotedblright\ corresponding to the
%optimal couple $\left(  \overline{X}(\cdot),\overline{u}(\cdot)\right)  $.
The first-order adjoint equation is given by
\begin{equation}
\left\{
\begin{array}
[c]{rl}%
dp^{u}(t)= & -\left[  \left(  b_{x}^{u}(t)\right)  ^{\intercal}p^{u}(t) +\sum\limits_{i=1}^{d}\left(  \sigma_{x}^{u,i}(t)\right)  ^{\intercal}q^{u,i}(t)+f_{x}^{u}(t)\right]  \mathrm{d}t \\
& +\sum\limits_{i=1}^{d}q^{u,i}(t)\mathrm{d}W_{i}(t), \quad  t \in [0,T], \\
p^{u}(T)= & \Phi_{x}(X^{u}(T)),
\end{array}
\right.  
\label{1st-adj-eq}%
\end{equation}
and the second-order adjoint equation is given by
\begin{equation}
\left\{
\begin{array}
[c]{rl}%
dP^{u}(t)= & -\left\{  \left(  b_{x}^{u}(t)\right)  ^{\intercal}%
P^{u}(t)+\left(  P^{u}(t)\right)  ^{\intercal}b_{x}^{u}(t)+\sum\limits_{i=1}%
^{d}\left(  \sigma_{x}^{u,i}(t)\right)  ^{\intercal}P^{u}(t)\sigma_{x}%
^{u,i}(t)\right.  \\
& +\sum\limits_{i=1}^{d}\left[  \left(  \sigma_{x}^{u,i}(t)\right)
^{\intercal}Q^{u,i}(t)+\left(  Q^{u,i}(t)\right)  ^{\intercal}\sigma_{x}%
^{u,i}(t)\right]  \\
& +\left.  H_{xx}(t,X^{u}(t),p^{u}(t),q^{u}(t),u(t))\right\}  \mathrm{d}t+\sum
\limits_{i=1}^{d}Q^{u,i}(t)\mathrm{d}W_{i}(t),\text{ \ }t\in\lbrack0,T],\\
P^{u}(T)= & \Phi_{xx}(X^{u}(T)).
\end{array}
\right.  
\label{2nd-adj-eq}%
\end{equation}
By a solution to (\ref{1st-adj-eq}) we mean a multiple $(p^{u}(\cdot), q^{u,1}(\cdot), \ldots ,q^{u,d}(\cdot))$ 
of $\mathbb{F}$-progressively measurable processes with values in $\mathbb{R}^{n} \times \mathbb{R}^{n \times d}$ satisfying (\ref{1st-adj-eq}).
The definition of solutions to (\ref{2nd-adj-eq}) is similar so we will not repeat it below.
The Hamiltonian $H$ is defined by
\begin{align}%
\label{def-Hamiltonian}
H(t,x,p,q,u) = & \ p^{\intercal}b(t,x,u) + \left\langle q,\sigma(t,x,u)\right\rangle +f(t,x,u),\\
\nonumber & (t,x,p,q,u) \in [0,T]\times\mathbb{R}^{n}\times\mathbb{R}^{n} \times \mathbb{R}^{n\times d} \times U.
\end{align}

%Under Assumption \ref{assum-1}, the following lemma shows that,
%across all admissible controls, the solution to (\ref{1st-adj-eq}) is uniformly bounded in $\mathcal{S}_{\mathbb{F}}^{8}([0,T];\mathbb{R}^{n}) \times \mathcal{M}_{\mathbb{F}}^{2,8}([0,T];\mathbb{R}^{n\times d})$.

Under Assumption \ref{assum-1}, applying Theorem 5.1 in \cite{Karoui97} yields the well-posedness of (\ref{1st-adj-eq}).

\begin{lemma}
\label{adj-1st-lem} Let Assumption \ref{assum-1} hold. Then, for any
$u(\cdot)\in\mathcal{U}[0,T]$, (\ref{1st-adj-eq}) admits a unique
solution $\left(  p^{u}(\cdot),q^{u}(\cdot)\right)  \in\mathcal{S}_{\mathbb{F}}^{8}([0,T];\mathbb{R}^{n})\times \left(\mathcal{M}_{\mathbb{F}}^{2,8}([0,T];\mathbb{R}^{n})\right)^{d}$,
where $q^{u}(\cdot)=\left(  q^{u,1}(\cdot),\ldots,q^{u,d}(\cdot)\right)  $.
%Moreover, we have
%\[
%\mathbb{E}\left[  \sup\limits_{t\in\lbrack0,T]}\left\vert %p^{u}(t)\right\vert
%^{8}+\left(  \int_{0}^{T}\left\vert q^{u}(t)\right\vert ^{2}\mathrm{d}t\right)
%^{4}\right]  \leqslant C,
%\]
%where $C$ is independent of $u(\cdot)$.
\end{lemma}

%Similar to Lemma \ref{adj-1st-lem}, the solution to the second-order
%adjoint equation (\ref{2nd-adj-eq}) is uniformly bounded in $\mathcal{S}_{\mathbb{F}}^{4}([0,T];\mathbb{R}^{n}) \times \left(  \mathcal{M}_{\mathbb{F}}%
%^{2,4}([0,T];\mathbb{S}^{n\times n})\right)  ^{d}$, across all admissible controls.

Using the above lemma and Theorem 5.1 in \cite{Karoui97} yields the well-posedness of (\ref{2nd-adj-eq}).
\begin{lemma}
\label{adj-2nd-lem} Let Assumption \ref{assum-1} hold. Then, for any
$u(\cdot)\in\mathcal{U}[0,T]$, (\ref{2nd-adj-eq}) admits a unique
solution $\left(  P^{u}(\cdot),Q^{u}(\cdot)\right)  \in \mathcal{S}_{\mathbb{F}}^{4}([0,T];\mathbb{R}^{n}) \times \left(  \mathcal{M}_{\mathbb{F}}^{2,4}([0,T];\mathbb{S}^{n\times n})\right)  ^{d}$, where $Q^{u}(\cdot)=\left(  Q^{u,1}(\cdot),\ldots,Q^{u,d}(\cdot)\right)  $. 
%Moreover, we have
%\[
%\mathbb{E}\left[  \sup\limits_{t\in\lbrack0,T]}\left\vert P^{u}(t)\right\vert
%^{4}+\left(  \int_{0}^{T}\left\vert Q^{u}(t)\right\vert ^{2}\mathrm{d}t\right)
%^{2}\right]  \leqslant C,
%\]
%where $C$ is independent of $u(\cdot)$.
\end{lemma}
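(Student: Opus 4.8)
The plan is to read (\ref{2nd-adj-eq}) as a linear, matrix-valued BSDE for the unknown pair $(P^{u}(\cdot),Q^{u}(\cdot))$ and to invoke the same well-posedness result (Theorem 5.1 in \cite{Karoui97}) already used for Lemma \ref{adj-1st-lem}, once the driver and terminal data are shown to satisfy its hypotheses. Writing the equation in the standard form $dP^{u}=-g(t,P^{u},Q^{u})\,dt+\sum_{i=1}^{d}Q^{u,i}\,dW_{i}$, the driver splits into an affine part
\[
(b_{x}^{u}(t))^{\intercal}P+P^{\intercal}b_{x}^{u}(t)+\sum_{i=1}^{d}(\sigma_{x}^{u,i}(t))^{\intercal}P\sigma_{x}^{u,i}(t)+\sum_{i=1}^{d}\big[(\sigma_{x}^{u,i}(t))^{\intercal}Q^{i}+(Q^{i})^{\intercal}\sigma_{x}^{u,i}(t)\big]
\]
plus the inhomogeneous term $H_{xx}^{u}(t):=H_{xx}(t,X^{u}(t),p^{u}(t),q^{u}(t),u(t))$. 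Because $b_{x}$ and $\sigma_{x}$ are bounded by Assumption \ref{assum-1}(i), the affine part is Lipschitz in $(P,Q)$ uniformly in $(t,\omega)$ with a constant depending only on $n$, $d$, $\left\Vert b_{x}\right\Vert_{\infty}$, $\left\Vert \sigma_{x}\right\Vert_{\infty}$; identifying $\mathbb{R}^{n\times n}$ with $\mathbb{R}^{n^{2}}$, this is precisely the structural requirement of the cited theorem.

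The key verification is that the data lie in the integrability classes matching the target space $\mathcal{S}_{\mathcal{F}}^{4}\times(\mathcal{M}_{\mathcal{F}}^{2,4})^{d}$. The terminal value $\Phi_{xx}(X^{u}(T))$ is bounded by Assumption \ref{assum-1}(ii), hence lies in $L_{\mathcal{F}_{T}}^{4}(\Omega;\mathbb{S}^{n\times n})$. For the free term, differentiating $H$ twice in $x$ and using that $b_{xx}$, $\sigma_{xx}$, $f_{xx}$ are bounded (Assumption \ref{assum-1}(i)) yields the pointwise bound $|H_{xx}^{u}(t)|\le C(1+|p^{u}(t)|+|q^{u}(t)|)$. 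Consequently
\[
\|H_{xx}^{u}(\cdot)\|_{2,4}^{4}=\mathbb{E}\Big[\Big(\int_{0}^{T}|H_{xx}^{u}(t)|^{2}\,dt\Big)^{2}\Big]\le C\Big(1+\mathbb{E}\big[\sup_{t}|p^{u}(t)|^{4}\big]+\mathbb{E}\Big[\Big(\int_{0}^{T}|q^{u}(t)|^{2}\,dt\Big)^{2}\Big]\Big),
\]
and the right-hand side is finite because Lemma \ref{adj-1st-lem} places $p^{u}(\cdot)$ in $\mathcal{S}_{\mathcal{F}}^{8}\subset\mathcal{S}_{\mathcal{F}}^{4}$ and $q^{u}(\cdot)$ in $(\mathcal{M}_{\mathcal{F}}^{2,8})^{d}\subset(\mathcal{M}_{\mathcal{F}}^{2,4})^{d}$ (finite horizon and finite measure). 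Thus $H_{xx}^{u}(\cdot)\in\mathcal{M}_{\mathcal{F}}^{2,4}([0,T];\mathbb{S}^{n\times n})$, and Theorem 5.1 in \cite{Karoui97} delivers a unique solution $(P^{u}(\cdot),Q^{u}(\cdot))\in\mathcal{S}_{\mathcal{F}}^{4}\times(\mathcal{M}_{\mathcal{F}}^{2,4})^{d}$.

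It remains to note that the solution takes values in symmetric matrices, as asserted. The terminal datum $\Phi_{xx}(X^{u}(T))$ is symmetric, and the driver maps $\mathbb{S}^{n\times n}$-valued arguments into $\mathbb{S}^{n\times n}$: when $P$ and the $Q^{i}$ are symmetric, each of $(b_{x}^{u})^{\intercal}P+P b_{x}^{u}$, $(\sigma_{x}^{u,i})^{\intercal}P\sigma_{x}^{u,i}$, $(\sigma_{x}^{u,i})^{\intercal}Q^{i}+(Q^{i})^{\intercal}\sigma_{x}^{u,i}$ and $H_{xx}^{u}$ is symmetric. Hence the BSDE can be posed directly on the linear subspace $\mathbb{S}^{n\times n}$, whose unique solution is, by uniqueness in the ambient space $\mathbb{R}^{n\times n}$, the solution found above. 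I expect the only genuinely delicate point to be the bookkeeping of exponents that makes the $\mathcal{S}^{8}/\mathcal{M}^{2,8}$-integrability of Lemma \ref{adj-1st-lem} feed exactly into $\mathcal{M}^{2,4}$ for the free term; the vectorization of the matrix equation and the symmetry argument are routine.
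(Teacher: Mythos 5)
Your proof is correct and takes exactly the paper's route: the paper disposes of this lemma in one line (``applying Theorem 5.1 in \cite{Karoui97} again''), relying on Lemma \ref{adj-1st-lem} to supply the integrability of the free term, and your argument simply fills in the details of that citation --- vectorizing the linear matrix BSDE, bounding $|H_{xx}^u(t)|\le C(1+|p^u(t)|+|q^u(t)|)$ via the boundedness of $b_{xx}$, $\sigma_{xx}$, $f_{xx}$, and checking that the $\mathcal{S}^{8}/\mathcal{M}^{2,8}$ bounds on $(p^u,q^u)$ place the driver in $\mathcal{M}_{\mathcal{F}}^{2,4}$. Your explicit symmetry argument (posing the equation on $\mathbb{S}^{n\times n}$ and concluding by uniqueness) is a detail the paper leaves implicit, and it is handled correctly.
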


Define $\mathcal{H}:[0,T] \times \mathbb{R}^{n} \times \mathbb{R}^{n} \times \mathbb{R}^{n\times d}\times\mathbb{S}^{n\times n}\times U \times U\longmapsto\mathbb{R}$ by
\begin{equation}%
\begin{array}
[c]{rl}
& \mathcal{H}(t,x,p,q,P,v,u)\\
= & H(t,x,p,q,v)+\dfrac{1}{2}\sum\limits_{i=1}^{d}\left(  \sigma
^{i}(t,x,v)-\sigma^{i}(t,x,u)\right)  ^{\intercal}P\left(  \sigma
^{i}(t,x,v)-\sigma^{i}(t,x,u)\right)  \\
& -\dfrac{1}{2}\sum\limits_{i=1}^{d}\left(  \sigma^{i}(t,x,u)\right)
^{\intercal}P\left(  \sigma^{i}(t,x,u)\right)  .
\end{array}
\label{def-H-function}%
\end{equation}
Then we can rewrite the following stochastic maximum principle (\cite{Peng90}, Theorem 3; \cite{YongZhou}, Theorem 3.2) for 
stochastic optimal control problem (\ref{cost-func})-(\ref{state-eq}) by $\mathcal{H}$-function.

\begin{theorem}
\label{thm-SMP}
Suppose Assumption \ref{assum-1}. Let $\left(  \overline{X}(\cdot),\overline{u}(\cdot)\right)$ be the optimal pair. 
Then, for $\mathrm{Leb}_{[0,T]} \otimes \mathbb{P}$-a.e. $(t,\omega)$, we have
\begin{equation}
\mathcal{H}(t,\overline{X}(t),\overline{p}(t),\overline{q}(t),\overline{P}(t),v,\overline{u}(t))\geqslant\mathcal{H}(t,\overline{X}(t),\overline{p}(t),\overline{q}(t),\overline{P}(t),\overline{u}(t),\overline{u}(t)), \quad \forall v\in U,
\label{SMP}%
\end{equation}
where $\left(  \overline{p}(\cdot),\overline{q}(\cdot)\right)  $, $\overline{P}(\cdot)$ are the solutions to
(\ref{1st-adj-eq}), (\ref{2nd-adj-eq}) respectively, corresponding to $\overline{u}(\cdot)$.
\end{theorem}

\begin{remark}
It should be highlighted that $\mathcal{H}$-function plays the same role as the common one in the global stochastic maximum principle (see \cite{YongZhou}, Theorem 3.2) 
but is slightly generalized to rely on double control arguments, for the demand of iterations in our MSA algorithm. 
\end{remark}

%Now we introduce some notations which will be used in the construction
%of the modified MSA for the control problem (\ref{cost-func})-(\ref{state-eq}).
Based on Theorem \ref{thm-SMP}, for any $u(\cdot) \in \mathcal{U}[0,T]$ and each $(t,\omega) \in [0,T] \times \Omega$, let $v(\cdot)$ be the process found from
\begin{equation}
v(t,\omega) \in \underset{v\in U}{\arg\min} \mathcal{H}(t,X^{u}(t,\omega),p^{u}(t,\omega),q^{u}(t,\omega),P^{u}(t,\omega),v,u(t,\omega)).
\label{def-control-argmin-H}%
\end{equation}
Since the mapping $v \mapsto \mathcal{H}(t,x,p,q,P,v,u)$ is continuous and $U$ is compact, Proposition D.5 in
\cite{OHL-JL-1996} guarantees the existence of an appropriate measurable selection $V(t,x,p,q,P,u)$, which minimizes
$\mathcal{H}(t,x,p,q,P,\cdot,u)$ over $U$ for each $(t,x,p,q,P,u)$. Setting $v(t)=V(t,X^{u}(t),p^{u}(t),q^{u}(t),P^{u}(t),u(t))$,
it is easy to verify $v(\cdot)\in\mathcal{U}[0,T]$ as the $\mathbb{F}$-progressive measurability of $v(\cdot)$ can be deduced by Doob's measurability theorem.
Then, define
\begin{align}
\label{def-Delta-H}
\Delta_{u}\mathcal{H}(t) = & \mathcal{H}(t,X^{u}(t),p^{u}(t),q^{u}(t),P^{u}(t),v(t),u(t)) \\
\nonumber & - \mathcal{H}(t,X^{u}(t),p^{u}(t),q^{u}(t),P^{u}(t),u(t),u(t)), \ t \in [0,T],
\end{align}
\begin{equation}
\label{def-mu-control}
\mu(u(\cdot)) = \mathbb{E}\left[ \int_{0}^{T} \Delta_{u} \mathcal{H}(t) \mathrm{d}t \right].
\end{equation}
In view of (\ref{def-control-argmin-H}), $\Delta_{u} \mathcal{H}(t) \leqslant 0$, $\mu(u(\cdot)) \leqslant 0$. Moreover, if $\mu(u(\cdot)) = 0$, then it means that
$u(\cdot)$ satisfies (\ref{SMP}). Hence we can regard $\mu(u(\cdot))$ as characterizing the extent to which the admissible control  $u(\cdot)$ deviates from satisfying the necessary conditions for optimality.

\section{The algorithm}
In this section, we establish an algorithm and then apply it to finding the near-optimal controls to (\ref{cost-func})-(\ref{state-eq}). Particularly, we obtain the convergence rate of the algorithm where there are some additional assumptions imposed on the coefficients in (\ref{state-eq}).

\subsection{An error estimate to construct the algorithm}
Given $\tau \in [0,T]$, $\varepsilon>0$ arbitrarily, put $E_{\tau \varepsilon}=[\tau - \varepsilon, \tau + \varepsilon] \bigcap [0,T]$.
For any $u(\cdot) \in \mathcal{U}[0,T]$, consider the two-parameter family of admissible controls
\begin{equation}
u_{\tau\varepsilon}(t)=\left\{
\begin{array}
[c]{ll}%
v(t), & t\in E_{\tau\varepsilon}\\
u(t), & t\in\lbrack0,T]\backslash E_{\tau\varepsilon},
\end{array}
\right.  \label{def-spike-variation}%
\end{equation}
where $v(\cdot)$ is determined by (\ref{def-control-argmin-H}).

\begin{proposition}
\label{prop-costfunc-dominate} Let Assumption \ref{assum-1} hold. Then
there exists a universal constant $C>0$ (independent of $u(\cdot)$, $\tau$, $\varepsilon$) such that
\begin{equation}
J\left(  u_{\tau \varepsilon}(\cdot)\right)  -J\left(  u(\cdot)\right)
\leqslant\mathbb{E}\left[  \int_{E_{\tau \varepsilon}}\Delta_{u}\mathcal{H}(t)\mathrm{d}t\right]  +C\varepsilon^{\frac{3}{2}}
\label{est-Jv-Ju}
\end{equation}
for any $u(\cdot) \in \mathcal{U}[0,T]$ and
$u_{\tau\varepsilon}(\cdot)$ defined by (\ref{def-spike-variation}),
where $\Delta_{u} \mathcal{H}$ is defined by (\ref{def-Delta-H}).
\end{proposition}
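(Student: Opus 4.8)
The plan is to carry out a second-order spike-variation expansion of the cost functional, exactly as in the proof of Peng's stochastic maximum principle, but keeping quantitative and \emph{uniform} control of every remainder. Write $X^{u_{\tau\varepsilon}}(\cdot)$ for the state trajectory driven by $u_{\tau\varepsilon}(\cdot)$ and set $\xi(\cdot)=X^{u_{\tau\varepsilon}}(\cdot)-X^{u}(\cdot)$, with $\xi(0)=0$. Since $u_{\tau\varepsilon}$ differs from $u$ only on $E_{\tau\varepsilon}$ and $|E_{\tau\varepsilon}|\le 2\varepsilon$, the first step is to establish the uniform moment estimates
\[
\mathbb{E}\Big[\sup_{t\in[0,T]}|\xi(t)|^{2k}\Big]\le C\varepsilon^{k},\qquad k=1,2,3,4,
\]
with $C$ depending only on the data of Assumption \ref{assum-1} (together with the dimensions, $T$, $\alpha$) but not on $u(\cdot)$, $\tau$, $\varepsilon$. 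This follows from the standard $L^{p}$-theory for SDEs, the boundedness of $b_{x},\sigma_{x}$, and the compactness of $U$: writing $\delta\psi(t):=\psi(t,X^{u}(t),v(t))-\psi(t,X^{u}(t),u(t))$ (supported on $E_{\tau\varepsilon}$) for $\psi=b,\sigma,f$, the drift perturbation contributes $(\int_{E_{\tau\varepsilon}}|\delta b|\,dt)^{2}=O(\varepsilon^{2})$, while the diffusion perturbation contributes, through the Burkholder--Davis--Gundy inequality, $\mathbb{E}\int_{E_{\tau\varepsilon}}|\delta\sigma|^{2}\,dt=O(\varepsilon)$, which dominates and yields the order $\varepsilon^{k}$. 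The budget up to $k=4$ is available because $X^{u}\in\mathcal{S}_{\mathcal{F}}^{8}$ uniformly by Lemma \ref{est-state-eq}.

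Next I would expand the two parts of $J(u_{\tau\varepsilon})-J(u)$. For the terminal cost, Taylor's formula gives $\Phi(X^{u_{\tau\varepsilon}}(T))-\Phi(X^{u}(T))=\Phi_{x}(X^{u}(T))^{\intercal}\xi(T)+\tfrac12\xi(T)^{\intercal}\Phi_{xx}(X^{u}(T))\xi(T)+R_{\Phi}$, and for the running cost I split $f(t,X^{u_{\tau\varepsilon}},u_{\tau\varepsilon})-f(t,X^{u},u)$ into a genuinely control-varied part $\delta f$, supported on $E_{\tau\varepsilon}$, and a part Taylor-expanded in $x$ to second order. The core of the argument is the duality obtained by applying It\^o's formula to $(p^{u}(t))^{\intercal}\xi(t)$ and to $\xi(t)^{\intercal}P^{u}(t)\xi(t)$ and taking expectations. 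Using the first-order adjoint equation (\ref{1st-adj-eq}), the drift of $(p^{u})^{\intercal}\xi$ collapses---the $b_{x}$- and $\sigma_{x}$-terms cancel against those produced by $d\xi$ and by the cross-variation $d\langle p^{u},\xi\rangle$---leaving the first-order Hamiltonian increment $(p^{u})^{\intercal}\delta b+\langle q^{u},\delta\sigma\rangle$ on $E_{\tau\varepsilon}$; using the second-order adjoint equation (\ref{2nd-adj-eq}), whose drift carries precisely $H_{xx}$, all quadratic-in-$\xi$ terms coming from $\Phi_{xx}$, $f_{xx}$ and the second-order Taylor remainders of $b,\sigma$ cancel, leaving the quadratic spike term $\tfrac12\sum_{i}(\delta\sigma^{i})^{\intercal}P^{u}\delta\sigma^{i}$. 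Adding $\delta f$ and recognizing that $(p^{u})^{\intercal}\delta b+\langle q^{u},\delta\sigma\rangle+\delta f$ equals $H(t,X^{u},p^{u},q^{u},v)-H(t,X^{u},p^{u},q^{u},u)$ on $E_{\tau\varepsilon}$, the definition (\ref{def-H-function}) of the $\mathcal{H}$-function shows that the surviving leading term is exactly $\mathbb{E}\int_{E_{\tau\varepsilon}}\Delta_{u}\mathcal{H}(t)\,dt$.

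What remains is to bound the aggregate remainder in absolute value by $C\varepsilon^{3/2}$ with $C$ uniform, which is the genuine obstacle and the reason Assumption \ref{assum-1}(iii) is imposed. The remainder is of two kinds. The third-order Taylor remainders ($R_{\Phi}$ and its analogues for $f,b,\sigma$) are controlled by the Lipschitz continuity in $x$ of $\Phi_{xx},f_{xx},b_{xx},\sigma_{xx}$: they are pointwise $\le C|\xi|^{3}$, and by Cauchy--Schwarz $\mathbb{E}\sup_{t}|\xi|^{3}\le(\mathbb{E}\sup_{t}|\xi|^{2})^{1/2}(\mathbb{E}\sup_{t}|\xi|^{4})^{1/2}\le C\varepsilon^{3/2}$. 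The cross terms supported on $E_{\tau\varepsilon}$ (products of a $\delta$-quantity with $\xi$, arising because derivatives are evaluated at $u_{\tau\varepsilon}$ rather than $u$) carry both $\mathbf{1}_{E_{\tau\varepsilon}}$ and a factor $\xi$, so $\mathbb{E}\int_{E_{\tau\varepsilon}}|\xi|\,dt\le|E_{\tau\varepsilon}|^{1/2}(\mathbb{E}\int_{E_{\tau\varepsilon}}|\xi|^{2}\,dt)^{1/2}\le C\varepsilon^{3/2}$, and likewise for those weighted by $q^{u},Q^{u}$. The delicate point is the uniformity of $C$: it forces me first to promote the well-posedness Lemmas \ref{adj-1st-lem}--\ref{adj-2nd-lem} to uniform-in-$u$ a priori bounds on $(p^{u},q^{u})$ in $\mathcal{S}_{\mathcal{F}}^{8}\times(\mathcal{M}_{\mathcal{F}}^{2,8})^{d}$ and on $(P^{u},Q^{u})$ in $\mathcal{S}_{\mathcal{F}}^{4}\times(\mathcal{M}_{\mathcal{F}}^{2,4})^{d}$, and second to track that every constant depends only on the prescribed parameters of Assumption \ref{assum-1}. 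The terms containing the unbounded martingale integrands $q^{u},Q^{u}$ are exactly where the $\mathcal{S}^{8}/\mathcal{M}^{2,8}$ integrability margin is consumed (by pairing, e.g., $\mathbb{E}\int_{0}^{T}|q^{u}|\,|\xi|^{3}\,dt\le(\mathbb{E}\int_{0}^{T}|q^{u}|^{2}dt)^{1/2}(\mathbb{E}\sup_{t}|\xi|^{6})^{1/2}$), and the uniform Lipschitz bounds of Assumption \ref{assum-1}(iii) are what upgrade the third-order remainder from a non-uniform $o(\varepsilon)$ to a uniform $O(\varepsilon^{3/2})$.
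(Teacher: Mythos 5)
Your proposal is correct in outline but takes a genuinely different route from the paper. The paper follows Peng's classical scheme: it introduces the first- and second-order variational equations (\ref{var-eq-1})--(\ref{var-eq-2}), proves in Lemma \ref{lem-error-X-higher-order} the uniform error estimate $\mathbb{E}[\sup_{t}|X^{u}_{\tau\varepsilon}(t)-X^{u}(t)-X_{1}(t)-X_{2}(t)|^{2}]\leq C\varepsilon^{3}$ together with $\mathbb{E}[\sup_{t}|X_{1}(t)|^{8}+\sup_{t}|X_{2}(t)|^{4}]\leq C\varepsilon^{4}$, Taylor-expands $J$ at $X^{u}+X_{1}+X_{2}$, and then applies It\^{o}'s formula to $p^{u}(X_{1}+X_{2})+\tfrac{1}{2}\mathrm{tr}\{P^{u}X_{1}X_{1}^{\intercal}\}$. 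You instead work with the raw increment $\xi=X^{u_{\tau\varepsilon}}-X^{u}$ and apply duality to $(p^{u})^{\intercal}\xi$ and $\xi^{\intercal}P^{u}\xi$. This trades the two auxiliary SDEs and the $\varepsilon^{3}$-error lemma for heavier bookkeeping inside the duality: in the paper's version the processes entering It\^{o}'s formula have exactly linear/quadratic dynamics, so the cancellations are algebraically exact and all Taylor errors are quarantined in Lemma \ref{lem-error-X-higher-order}; in your version the Taylor remainders of $b,\sigma,f$ appear multiplied by $p^{u},q^{u},P^{u},Q^{u}$, which is precisely why you need the pairings such as $\mathbb{E}\int_{0}^{T}|q^{u}||\xi|^{3}dt\leq(\mathbb{E}\int_{0}^{T}|q^{u}|^{2}dt)^{1/2}(\mathbb{E}\sup_{t}|\xi|^{6})^{1/2}$. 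Your moment estimates $\mathbb{E}[\sup_{t}|\xi(t)|^{2k}]\leq C\varepsilon^{k}$, the $C\varepsilon^{3/2}$ bound on the cubic remainders via Assumption \ref{assum-1}(iii), the identification of the surviving spike terms with $\Delta_{u}\mathcal{H}$, and your insistence on promoting Lemmas \ref{adj-1st-lem}--\ref{adj-2nd-lem} to uniform-in-$u$ a priori bounds all match the paper's constant-tracking and are sound.

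One step, however, is stated too optimistically: the cross terms on $E_{\tau\varepsilon}$ weighted by the martingale integrands, i.e.\ terms of the type $\mathbb{E}\int_{E_{\tau\varepsilon}}\langle q^{u},\delta\sigma_{x}\,\xi\rangle dt$ and $\mathbb{E}\int_{E_{\tau\varepsilon}}\mathrm{tr}\{Q^{u,i}(\delta\sigma^{i}\xi^{\intercal}+\xi\,\delta\sigma^{i\intercal})\}dt$, are \emph{not} handled "likewise" by your Cauchy--Schwarz scheme. Since $\mathbb{E}\int_{E_{\tau\varepsilon}}|\xi|^{2}dt\leq C\varepsilon^{2}$, that scheme yields only $C\varepsilon\,(\mathbb{E}\int_{E_{\tau\varepsilon}}|q^{u}|^{2}dt)^{1/2}$, and with the bare norm $\mathbb{E}\int_{0}^{T}|q^{u}|^{2}dt\leq C$ this is $O(\varepsilon)$, which would destroy the $\varepsilon^{3/2}$ remainder (the leading term $\mathbb{E}\int_{E_{\tau\varepsilon}}\Delta_{u}\mathcal{H}\,dt$ is itself of order $\varepsilon$). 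To recover $O(\varepsilon^{3/2})$ one needs a quantitative localization such as $\mathbb{E}\int_{E_{\tau\varepsilon}}|q^{u}|^{2}dt\leq C\varepsilon$ uniformly in $u$ and $\tau$ (e.g.\ a pointwise-in-time bound $\sup_{t}\mathbb{E}|q^{u}(t)|^{2}\leq C$, available via gradient/flow representations of the adjoint in Markovian settings but delicate for arbitrary adapted controls); the classical SMP proofs only get $o(\varepsilon)$ here by absolute continuity of the integral, which is non-uniform. In fairness, the identical terms arise in the paper's own duality computation --- from $\hat{\sigma}_{x}^{i}X_{1}\mathrm{1}_{E_{\tau\varepsilon}}$ in (\ref{var-eq-2}) and from the covariation of $Q^{u}$ with $X_{1}\hat{\sigma}^{i\intercal}\mathrm{1}_{E_{\tau\varepsilon}}$ --- and the paper's final It\^{o} display passes over them silently, so this is a difficulty shared by both routes rather than one created by your decomposition; but since your write-up explicitly claims the bound, you should either supply the uniform pointwise moment estimate for $q^{u},Q^{u}$ or restructure the estimate, as the displayed Cauchy--Schwarz alone does not close this step.
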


To prove Proposition \ref{prop-costfunc-dominate}, we introduce the following two SDEs:
\begin{equation}
\left\{
\begin{array}
[c]{rl}%
dX_{1}(t)= & b_{x}(t)X_{1}(t)\mathrm{d}t+\sum\limits_{i=1}^{d}\left[  \sigma_{x}%
^{i}(t)X_{1}(t)+\hat{\sigma}^{i}(t)\mathrm{1}_{E_{\tau\varepsilon}}(t)\right]
\mathrm{d}W_{i}(t),\\
X_{1}(0)= & 0,
\end{array}
\right.  \label{var-eq-1}%
\end{equation}%
\begin{equation}
\left\{
\begin{array}
[c]{rl}%
dX_{2}(t)= & \left[  b_{x}(t)X_{2}(t)+\hat{b}(t)\mathrm{1}_{E_{\tau
\varepsilon}}(t)+\frac{1}{2}b_{xx}(t)X_{1}(t)X_{1}(t)\right]  \mathrm{d}t\\
& +\sum\limits_{i=1}^{d}\left[  \sigma_{x}^{i}(t)X_{2}(t)+\frac{1}{2}%
\sigma_{xx}^{i}(t)X_{1}(t)X_{1}(t)+\hat{\sigma}_{x}^{i}(t)X_{1}(t)\mathrm{1}%
_{E_{\tau\varepsilon}}(t)\right]  \mathrm{d}W_{i}(t),\\
X_{2}(0)= & 0,
\end{array}
\right.  \label{var-eq-2}%
\end{equation}
where, for $\psi=b$, $\sigma^{1}, \ldots, \sigma^{d} $,
\begin{align*}
  \hat{\psi}(t):= & \ \psi(t,X^{u}(t),v(t))-\psi(t,X^{u}(t),u(t)),\\
\psi_{xx}(t)X_{1}(t)X_{1}(t):= & \left(  \mathrm{tr}\left\{  \psi_{xx}%
^{1}(t)X_{1}(t)X_{1}^{\intercal}(t)\right\}  ,\ldots,\mathrm{tr}\left\{
\psi_{xx}^{n}(t)X_{1}(t)X_{1}^{\intercal}(t)\right\}  \right)  ^{\intercal}.
\end{align*}
In the upcoming proof, the universal constant $C$ may depend on $n$, $d$, $T$, $\alpha$,
$L$, $\sup | b_{x} |$, $\sup | \sigma_{x} |$, $\sup | \Phi_{xx} |$, $\sup | f_{xx} |$, and will
change from line to line in our proof.

\begin{proof}[Proof of Proposition \ref{prop-costfunc-dominate}]

At first, we need to prove an estimate
\begin{equation}
\mathbb{E}\left[  \sup\limits_{t\in\lbrack0,T]}\left\vert X_{\tau\varepsilon
}^{u}(t)-X^{u}(t)-X_{1}(t)-X_{2}(t)\right\vert ^{2}\right]  \leqslant C\varepsilon^{3},
\label{error-X-higher-order}%
\end{equation}
where $X_{\tau\varepsilon}^{u}(\cdot)$ is the state trajectory corresponding to $u_{\tau\varepsilon}(\cdot)$,
$X_{1}(\cdot)$, $X_{2}(\cdot)$ are solutions to (\ref{var-eq-1}), (\ref{var-eq-2}) respectively,
and $C>0$ is independent of $u(\cdot)$, $v(\cdot)$ and $\varepsilon$.
To this end, from (\ref{est-forward-state-eq}) and (\ref{def-spike-variation}), one can verify by a standard estimate for SDEs (\cite{Carmona, YongZhou}) that
\begin{equation}
\mathbb{E}\left[  \sup\limits_{t\in\lbrack0,T]}\left\vert X_{1}(t)\right\vert
^{8}+\sup\limits_{t\in\lbrack0,T]}\left\vert X_{2}(t)\right\vert ^{4}\right]
\leqslant C\varepsilon^{4},\label{est-X1-X2}%
\end{equation}
where $C>0$ is independent of $u(\cdot)$, $v(\cdot)$, $\tau$ and $\varepsilon$.
Set $\hat{X}(\cdot)=X_{1}(\cdot)+X_{2}(\cdot)$,
\begin{align*}
\tilde{b}_{xx}(s) = & \int_{0}^{1} \int_{0}^{1} \theta b_{xx}(s,\overline{X}(s) + \rho\theta\hat{X}(s),u_{\tau\varepsilon}(s))d\rho d\theta,\\
\tilde{\sigma}_{xx}^{i}(s) = & \int_{0}^{1} \int_{0}^{1} \theta \sigma_{xx}^{i}(s,\overline{X}(s) + \rho \theta \hat{X}(s),u_{\tau\varepsilon}(s)) d\rho d\theta, \ i=1,\ldots,d.
\end{align*}
We have
\begin{align*}
  & \int_{0}^{t}b(s,X^{u}(s)+\hat{X}(s),u_{\tau\varepsilon}(s))\mathrm{d}s+\sum\limits_{i=1}^{d}\int_{0}^{t}\sigma^{i}(s,X^{u}(s)+\hat{X}(s),u_{\tau \varepsilon}(s))\mathrm{d}W_{i}(s)\\
\nonumber = & \int_{0}^{t}\left[  b(s,X^{u}(s),u_{\tau\varepsilon}(s))+b_{x}(s,X^{u}(s),u_{\tau\varepsilon}(s))\hat{X}(s)+\tilde{b}_{xx}(s)\hat{X}(s)\hat{X}(s)\right] \mathrm{d}s\\
\nonumber & +\sum\limits_{i=1}^{d}\int_{0}^{t}\left[  \sigma^{i}(s,X^{u}(s),u_{\tau \varepsilon}(s))+\sigma_{x}^{i}(s,X^{u}(s),u_{\tau\varepsilon}(s))\hat{X}(s)+\tilde{\sigma}_{xx}^{i}(s)\hat{X}(s)\hat{X}(s)\right]  \mathrm{d}W_{i}(s)\\
\nonumber = & \int_{0}^{t}\left[  b(s)+b_{x}(s)\hat{X}(s)+\frac{1}{2}b_{xx}(s)\hat{X}(s)\hat{X}(s)\right] \mathrm{d}s\\
\nonumber & +\sum\limits_{i=1}^{d}\left[  \int_{0}^{t}\sigma^{i}(s)+\sigma_{x}^{i}(s)\hat{X}(s)+\frac{1}{2}\sigma_{xx}^{i}(s)\hat{X}(s)\hat{X}(s)\right] \mathrm{d}W_{i}(s)\\
\nonumber & +\int_{0}^{t}\hat{b}(s)\mathrm{1}_{E_{\tau\varepsilon}}(s)\mathrm{d}s+\int_{0}^{t}\hat{b}_{x}(s)\hat{X}(s)\mathrm{1}_{E_{\tau\varepsilon}}(s)\mathrm{d}s+\int_{0}^{t}\left[  \tilde{b}_{xx}(s)-b_{xx}(s)\right]  \hat{X}(s)\hat{X}(s)\mathrm{d}s\\
\nonumber & +\sum\limits_{i=1}^{d}\int_{0}^{t}\hat{\sigma}^{i}(s)\mathrm{1}_{E_{\tau\varepsilon}}(s)\mathrm{d}W_{i}(s)+\sum\limits_{i=1}^{d}\int_{0}^{t}\hat{\sigma}_{x}^{i}(s)\hat{X}(s)\mathrm{1}_{E_{\tau\varepsilon}}(s)\mathrm{d}W_{i}(s)\\
\nonumber & +\sum\limits_{i=1}^{d}\int_{0}^{t}\left[  \tilde{\sigma}_{xx}^{i}(s)-\sigma_{xx}^{i}(s)\right]  \hat{X}(s)\hat{X}(s)\mathrm{d}W_{i}(s)\\
\nonumber = & \ X^{u}(t) - x_{0} + \hat{X}(t) + \int_{0}^{t}\Pi_{\tau\varepsilon}(s)\mathrm{d}s + \sum\limits_{i=1}^{d}\int_{0}^{t}\Lambda_{\tau\varepsilon}^{i}(s)\mathrm{d}W_{i}(s),
\end{align*}
where (using (\ref{var-eq-1}) and (\ref{var-eq-2}))
\begin{align*}
\Pi_{\tau\varepsilon}(s) = & \ \hat{b}_{x}(s)X_{2}(s)\mathrm{1}_{E_{\tau
\varepsilon}}(s)+\frac{1}{2}b_{xx}(s)\left[  \hat{X}(s)\hat{X}(s)-X_{1}%
(t)X_{1}(t)\right]  \\
& +\left[  \tilde{b}_{xx}(s)-b_{xx}(s)\right]  \hat{X}(s)\hat{X}(s), \\
\Lambda_{\tau\varepsilon}^{i}(s)= & \ \hat{\sigma}^{i}(s)X_{2}(s)\mathrm{1}_{E_{\tau\varepsilon}}(s)+\frac{1}{2}\sigma_{xx}^{i}(s)\left[  \hat{X}%
(s)\hat{X}(s)-X_{1}(t)X_{1}(t)\right]  \\
& +\left[  \tilde{\sigma}_{xx}^{i}(s)-\sigma_{xx}^{i}(s)\right]  \hat{X}(s)\hat{X}(s).
\end{align*}
Thus, we obtain
\begin{align*}
X^{u}(t) + \hat{X}(t) = & \ x_{0} + \int_{0}^{t}b(s,X^{u}(s) \hat{X}(s),u_{\tau \varepsilon}(s))\mathrm{d}s \\
& + \sum\limits_{i=1}^{d}\int_{0}^{t}\sigma^{i}(s,X^{u}(s) + \hat{X}(s),u_{\tau\varepsilon}(s))\mathrm{d}W_{i}(s) \\
& -\int_{0}^{t} \Pi_{\tau\varepsilon}(s)\mathrm{d}s - \sum\limits_{i=1}^{d}\int_{0}^{t}\Lambda_{\tau\varepsilon}^{i}(s)\mathrm{d}W_{i}(s).
\end{align*}
Since
\[
X_{\tau\varepsilon}^{u}(t)=x_{0}+\int_{0}^{t}b(s,X_{\tau\varepsilon}%
^{u}(s),u_{\tau\varepsilon}(s))\mathrm{d}s+\sum\limits_{i=1}^{d}\int_{0}^{t}\sigma
^{i}(s,X_{\tau\varepsilon}^{u}(s),u_{\tau\varepsilon}(s))\mathrm{d}W_{i}(s),
\]
we can derive%
\begin{align}
\label{eq-higher-order}
\left(  X_{\tau\varepsilon}^{u}-X^{u}-\hat{X}\right)  (t)= & \int_{0}^{t}A_{\tau\varepsilon}(s)\left(  X_{\tau\varepsilon}^{u}-X^{u}-\hat{X}\right)  (s)\mathrm{d}s \\
\nonumber & + \sum\limits_{i=1}^{d}\int_{0}^{t}B_{\tau\varepsilon}^{i}(s)\left( X_{\tau\varepsilon}^{u} - X^{u}-\hat{X}\right) (s)\mathrm{d}W_{i}(s) \\
\nonumber & +\int_{0}^{t}\Pi_{\tau\varepsilon}(s)\mathrm{d}s+\sum\limits_{i=1}^{d}\int_{0}^{t}\Lambda_{\tau\varepsilon}^{i}(s)\mathrm{d}W_{i}(s),
\end{align}
where
\begin{align*}
  A_{\tau\varepsilon}(s)= & \int_{0}^{1}b_{x}\left(  s,\overline{X}(s)+\hat
{X}(s)+\theta\left(  X_{\tau\varepsilon}^{u}(s)-X^{u}(s)-\hat{X}(s)\right)
,u_{\tau\varepsilon}(s)\right)  d\theta,\\
B_{\tau\varepsilon}^{i}(s)= & \int_{0}^{1}\sigma_{x}^{i}\left(  s,\bar
{X}(s)+\hat{X}(s)+\theta\left(  X_{\tau\varepsilon}^{u}(s)-X^{u}(s)-\hat
{X}(s)\right)  ,u_{\tau\varepsilon}(s)\right)  d\theta,\text{ }i=1,\ldots,d.
\end{align*}
Since $b_{xx}$ and $\sigma_{xx}$ are both Lipschitz continuous in $x$, from
(\ref{est-X1-X2}), one can verify that
\begin{equation}
\mathbb{E}\left[  \sup_{t\in\lbrack0,T]}\left\vert \int_{0}^{t}\Pi
_{\tau\varepsilon}(s)\mathrm{d}s\right\vert ^{2}+\sup_{t\in\lbrack0,T]}\left\vert
\sum\limits_{i=1}^{d}\int_{0}^{t}\Lambda_{\tau\varepsilon}^{i}(s)\mathrm{d}W_{i}%
(s)\right\vert ^{2}\right]  \leqslant C\varepsilon^{3}%
.\label{remainder-higher-order}%
\end{equation}
As $A_{\tau\varepsilon}(\cdot)$, $B_{\tau\varepsilon}^{i}(\cdot)$ are bounded,
(\ref{error-X-higher-order}) follows from (\ref{remainder-higher-order}) and a standard estimate for (\ref{eq-higher-order}).
	
Now we will accomplish the proof. Since $\Phi_{xx}$ and $f_{xx}$ are both Lipschitz continuous,
using (\ref{error-X-higher-order}) and (\ref{est-X1-X2}), we have
\begin{align}
\label{cost-func-expansion}
& J(u_{\tau\varepsilon}(\cdot))-J(u(\cdot))\\
\nonumber = & \ \mathbb{E}\left[  \Phi(X_{\tau\varepsilon}^{u}(T))-\Phi(X^{u}(T))+\int_{0}^{T}\left[  f(t,X_{\tau\varepsilon}^{u}(t),u_{\tau\varepsilon}(t))-f(t)\right]  \mathrm{d}t\right]  \\
\nonumber = & \ \mathbb{E}\left[  \Phi\left(  X^{u}+X_{1}+X_{2}\right)  (T)-\Phi(X^{u}(T))\right]  \\
\nonumber & +\mathbb{E}\left[  \int_{0}^{T}\left[  f(t,\left(  X^{u}+X_{1}+X_{2}\right)(t),u_{\tau\varepsilon}(t))-f(t, X^{u} (t),u_{\tau \varepsilon}(t))\right]  \mathrm{d}t\right]  \\
\nonumber & +\mathbb{E}\left[  \int_{0}^{T}\left[  f(t, X^{u} (t),u_{\tau \varepsilon}(t)) - f(t) \right]  \mathrm{d}t\right]  +R_{1}(\varepsilon)\\
\nonumber = & \ \mathbb{E}\left[  \Phi_{x}(X^{u}(T))\left(  X_{1}(T)+X_{2}(T)\right) + \frac{1}{2}\Phi_{xx}(X^{u}(T))X_{1}(T)X_{1}(T)\right]  \\
\nonumber & +\mathbb{E}\left[  \int_{0}^{T}\left[  f_{x}(t)\left(  X_{1}(t)+X_{2}(t)\right)  +\frac{1}{2}f_{xx}(t)X_{1}(t)X_{1}(t)\right]  \mathrm{d}t\right]  \\
\nonumber & +\mathbb{E}\left[  \int_{0}^{T}\left[  f(t,X^{u}(t),u_{\tau\varepsilon}(t))-f(t)\right]  \mathrm{d}t\right]  +R_{1}(\varepsilon)+R_{2}(\varepsilon)%
\end{align}
with
\begin{equation}
\label{est-remainder}
\left\vert R_{1}(\varepsilon)\right\vert \leqslant C \varepsilon^{\frac{3}{2}} , \quad \left\vert R_{2}(\varepsilon)\right\vert \leqslant C \varepsilon^{2},
\end{equation}
where $C>0$ is independent of $u(\cdot)$, $v(\cdot)$, $\tau$ and $\varepsilon$.
Employing (\ref{1st-adj-eq}), (\ref{2nd-adj-eq}), (\ref{cost-func-expansion}), and applying It\^{o}'s
lemma to $p^{u}(t)(X_{1}(t)+X_{2}(t))+\frac{1}{2}\mathrm{tr} \left\{ P^{u}(t)X_{1}(t)\left( X_{1}(t)\right)^{\intercal} \right\}$ on $[0,T]$ yields
\begin{align*}
  & J(u_{\tau\varepsilon}(\cdot))-J(u(\cdot))\\
= & \ \mathbb{E}\left[  \int_{0}^{T}\left[  H(t,X^{u}(t),p^{u}(t),q^{u}%
(t),u_{\tau\varepsilon}(t))-H(t,X^{u}(t),p^{u}(t),q^{u}(t),u(t))\right]
\mathrm{d}t\right]  \\
& \ +\frac{1}{2}\mathbb{E}\left[  \sum\limits_{i=1}^{d}\int_{0}^{T}\left(
\sigma^{i}(t,X^{u}(t),u_{\tau\varepsilon}(t))-\sigma^{i}(t)\right)
^{\intercal}P^{u}(t)\left(  \sigma^{i}(t,X^{u}(t),u_{\tau\varepsilon
}(t))-\sigma^{i}(t)\right)  \mathrm{d}t\right]  \\
& \ + R_{1}(\varepsilon)+R_{2}(\varepsilon)\\
= & \ \mathbb{E}\left[  \int_{E_{\tau\varepsilon}} \left[  \mathcal{H}(t,X^{u}(t),p^{u}%
(t),q^{u}(t),v(t),u(t))-\mathcal{H}(t,X^{u}(t),p^{u}%
(t),q^{u}(t),u(t),u(t))\right] \mathrm{d}t\right]
\\
& \ + R_{1}(\varepsilon)+R_{2}(\varepsilon)\\
= & \ \mathbb{E}\left[  \int_{E_{\tau\varepsilon}}\Delta_{u}\mathcal{H}%
(t)\mathrm{d}t\right] + R_{1}(\varepsilon)+R_{2}(\varepsilon).
\end{align*}
Combining this with (\ref{est-remainder}) implies (\ref{est-Jv-Ju}).
\end{proof}

\begin{remark}
Although the construction of $u_{\tau\varepsilon}(\cdot)$ in (\ref{def-spike-variation}) is a spike perturbation of the given $u(\cdot)$,
one may be unable to obtain (\ref{est-Jv-Ju}) by completely following the same approach as that to deriving the global stochastic maximum principle for (\ref{cost-func})-(\ref{state-eq}), 
due to the additional requirement that the constant $C$ must be uniform across $u(\cdot)$, $\tau$, and $\varepsilon$.
\end{remark}

Proposition \ref{prop-costfunc-dominate} provides us with a mechanism to reduce $J(u(\cdot))$ as long as $u(\cdot)$ violates the stochastic maximum principle.
To illustrate this mechanism, fix a $u(\cdot) \in \mathcal{U}[0,T]$ arbitrarily and put $\mathcal{T}_{u} = \left\{ t \in [0,T] : \mathbb{E} \left[ \Delta_{u} \mathcal{H}(t) \right] < 0 \right\}$. 
Only the following two situations occur:
\begin{itemize}
  \item $\mathrm{Leb}_{[0,T]}(\mathcal{T}_{u}) = 0$. Due to the definition of $\Delta_{u} \mathcal{H}$, one can easily deduce that $\Delta_{u}\mathcal{H}(t,\omega) = 0$, $\mathrm{Leb}_{[0,T]} \otimes \mathbb{P}$-a.e.,
which implies that $u(\cdot)$ satisfies the stochastic maximum principle (\ref{SMP}) and $J(u(\cdot))$ cannot be reduced by constructing a $u_{\tau \varepsilon}(\cdot)$.

  \item $\mathrm{Leb}_{[0,T]}(\mathcal{T}_{u}) > 0$. Employing the Lebesgue differentiation theorem, we have
        \[
        \lim\limits _{\varepsilon \rightarrow 0^{+}} \frac{1}{\varepsilon} \int_{E_{\tau \varepsilon}} \mathbb{E} \left[ \Delta_{u} \mathcal{H}(t) \right] \mathrm{d}t = \mathbb{E} \left[ \Delta_{u} \mathcal{H}(\tau) \right], \quad \mathrm{Leb}_{[0,T]} \text{-a.e. } \tau \in \mathcal{T}_{u},
        \]
        whence we deduce $\int_{E_{\tau \varepsilon}} \mathbb{E} \left[ \Delta_{u} \mathcal{H}(t) \right] \mathrm{d}t \leqslant -C_{u, \tau} \varepsilon < 0$ for sufficiently small $\varepsilon$ and some constant $C_{u, \tau}>0$ depending only on $u(\cdot)$ and $\tau$.
        Consequently, for $\mathrm{Leb}_{[0,T]}$-a.e. $\tau \in \mathcal{T}_{u}$ and sufficiently small $\varepsilon$, using Proposition \ref{prop-costfunc-dominate} yields
        \[
          J\left(  u_{\tau \varepsilon}(\cdot)\right) - J\left( u(\cdot) \right) \leqslant -C \varepsilon \left( \frac{C_{u,\tau}}{C} - \varepsilon^{\frac{1}{2}} \right) < 0.
        \]
\end{itemize}
To demonstrate the necessity of $P^{u}(\cdot)$, we need to compare it with the error estimates involving only the first-order adjoint processes.
Using our notation, for any $u(\cdot), u^{\prime}(\cdot) \in \mathcal{U}[0,T]$, the error estimate developed in \cite{BDL-MSA-2020}, Lemma 2.3 reads
\begin{align}
  \label{error-est-H}
  & J(u^{\prime}(\cdot)) - J(u(\cdot)) \\
\nonumber \leqslant & \ \mathbb{E}\left[ \int_{0}^{T} \left[ H(t, X^{u}(t), p^{u}(t), q^{u}(t), u^{\prime}(t)) - H(t, X^{u}(t), p^{u}(t), q^{u}(t), u(t)) \right] \mathrm{d}t \right] \\
\nonumber & \ + C \mathbb{E} \left[ \int_{0}^{T} \left\vert b(t, X^{u}(t), u^{\prime}(t)) - b(t, X^{u}(t), u(t)) \right\vert ^{2} \mathrm{d}t \right] \\
\nonumber & \ + C \mathbb{E} \left[ \int_{0}^{T} \left\vert \sigma(t, X^{u}(t), u^{\prime}(t)) - \sigma(t, X^{u}(t), u(t)) \right\vert ^{2} \mathrm{d}t \right] \\
\nonumber & \ + C \mathbb{E} \left[ \int_{0}^{T} \left\vert H_{x}(t, X^{u}(t), p^{u}(t), q^{u}(t), u^{\prime}(t)) - H_{x}(t, X^{u}(t), p^{u}(t), q^{u}(t), u(t)) \right\vert ^{2} \mathrm{d}t \right],
\end{align}
for some constant $C>0$ independent of $u(\cdot)$ and $u^{\prime}(\cdot)$.
Take $u(\cdot) = u_{\tau \varepsilon}(\cdot)$, which is defined by (\ref{def-spike-variation}), with $v(\cdot)$ being replaced by the following process $v^{\prime}(\cdot)$ determined by
\[
v^{\prime}(t,\omega) \in \underset{v \in U}{\arg\min} H(t, X^{u}(t,\omega), p^{u}(t,\omega), q^{u}(t,\omega), v), \quad (t,\omega) \in [0,T] \times \Omega.
\]
Then, using Lemma \ref{lem-est-state-eq} and Lemma \ref{adj-1st-lem}, we obtain
\begin{equation}
  \label{J-mechanism-H}
    J\left(  u_{\tau \varepsilon}(\cdot)\right) - J\left(  u(\cdot)\right)
\leqslant \mathbb{E}\left[  \int_{E_{\tau \varepsilon}}\Delta_{u} H(t)\mathrm{d}t \right] + C \left( \varepsilon + \mathbb{E}\left[  \int_{E_{\tau \varepsilon}} \left\vert q^{u}(t) \right\vert ^{2} \mathrm{d}t \right] \right)
\end{equation}
for some constant $C>0$ independent of $u(\cdot)$, $\tau$, and $\varepsilon$,
where 
$$
\Delta_{u} H(t) := H(t, X^{u}(t), p^{u}(t), q^{u}(t), v^{\prime}(t)) - H(t, X^{u}(t), p^{u}(t), q^{u}(t), u(t)), \quad t \in [0,T].
$$
For $\mathrm{Leb}_{[0,T]}$-a.e. $\tau \in \{ t \in [0,T]: \mathbb{E} \left[ \Delta_{u}H(t) \right] < 0\}$ and sufficiently small $\varepsilon$, 
following a similar analysis as above yields
\[
  J\left(  u_{\tau \varepsilon}(\cdot)\right) - J\left(  u(\cdot)\right)
\leqslant \left[ - C_{u, \tau} + C \left( 1 + C_{u, \tau}^{\prime}  \right) \right] \varepsilon
\]
with two positive constants $C_{u, \tau}$, $C_{u, \tau}^{\prime}$, depending only $u(\cdot)$ and $\tau$.
One cannot judge whether $J\left(  u_{\tau \varepsilon}(\cdot)\right)$ is less than $J\left(  u(\cdot)\right)$ since the sign of $C \left( 1 + C_{u, \tau}^{\prime}  \right) - C_{u, \tau}$ is unknown.
Unlike in (\ref{est-Jv-Ju}), the absence of $P^{u}(\cdot)$ in (\ref{J-mechanism-H}) prevents the construction of $u_{\tau \varepsilon}(\cdot)$ from neutralizing the influence of the positive remainder term in (\ref{error-est-H}).

\subsection{Construction and convergence of the algorithm}
Let $N$ be a positive integer. Put $\varepsilon_{N}^{}=T \cdot 2^{-N}$, $\tau_{j}^{N}=(2j-1)\varepsilon_{N}^{}$, $j=1,2,\ldots,2^{N-1}$.
We simply denote by $E_{j}^{N}$ the set $E_{\tau_{j}^{N} \varepsilon_{N}^{}}$ and by $u_{N_{j}}^{}(\cdot)$ the control $u_{\tau_{j}^{N} \varepsilon_{N}^{}}(\cdot)$. The following lemma describes a method to find such a $\tau$ mentioned above.

\begin{lemma}
\label{lem-tau-decrease}
Let Assumption \ref{assum-1} hold. Then, for any given integer $N \geqslant 1$, there exists at least one number $j \in \{1,2,\ldots,2^{N-1}\}$ such that
\begin{equation}
\label{ineq-tau-decrease-exist}
\mathbb{E}\left[  \int_{E_{j}^{N}}\Delta_{u}\mathcal{H}(t)\mathrm{d}t\right] \leqslant 2\varepsilon_{N}^{}\frac{\mu(u(\cdot))}{T},
\end{equation}
where $\mu(u(\cdot))$ is defined by (\ref{def-mu-control}).
\end{lemma}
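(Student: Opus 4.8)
The plan is to prove Lemma \ref{lem-tau-decrease} by a straightforward averaging (pigeonhole) argument over the dyadic partition of $[0,T]$. First I would observe that the intervals $E_{j}^{N}=[\tau_{j}^{N}-\varepsilon_{N},\tau_{j}^{N}+\varepsilon_{N}]\cap[0,T]$ for $j=1,2,\ldots,2^{N-1}$, with centers $\tau_{j}^{N}=(2j-1)\varepsilon_{N}$ and half-width $\varepsilon_{N}=T\cdot 2^{-N}$, have length $2\varepsilon_{N}$ each (the boundary cases being truncated) and partition $[0,T]$ up to a set of measure zero: indeed $\bigcup_{j=1}^{2^{N-1}}E_{j}^{N}=[0,T]$ and the interiors are pairwise disjoint, since consecutive centers are spaced $2\varepsilon_{N}$ apart. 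Hence the sets $\{E_{j}^{N}\}_{j}$ form a disjoint (up to null sets) cover of $[0,T]$.

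The key step is then to sum the quantities $\mathbb{E}\left[\int_{E_{j}^{N}}\Delta_{u}\mathcal{H}(t)\,dt\right]$ over $j$. Because the $E_{j}^{N}$ tile $[0,T]$, the additivity of the integral gives
\begin{equation}
\sum_{j=1}^{2^{N-1}}\mathbb{E}\left[\int_{E_{j}^{N}}\Delta_{u}\mathcal{H}(t)\,dt\right]=\mathbb{E}\left[\int_{0}^{T}\Delta_{u}\mathcal{H}(t)\,dt\right]=\mu(u(\cdot)),
\end{equation}
where the last equality is the definition (\ref{def-mu-control}) of $\mu(u(\cdot))$. Since there are exactly $2^{N-1}$ terms in the sum, the minimum term cannot exceed the average; that is, there exists at least one index $j^{*}\in\{1,\ldots,2^{N-1}\}$ with
\begin{equation}
\mathbb{E}\left[\int_{E_{j^{*}}^{N}}\Delta_{u}\mathcal{H}(t)\,dt\right]\leq\frac{1}{2^{N-1}}\sum_{j=1}^{2^{N-1}}\mathbb{E}\left[\int_{E_{j}^{N}}\Delta_{u}\mathcal{H}(t)\,dt\right]=\frac{\mu(u(\cdot))}{2^{N-1}}.
\end{equation}

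To finish I would rewrite the right-hand side in the form demanded by (\ref{ineq-tau-decrease-exist}). Using $\varepsilon_{N}=T\cdot 2^{-N}$ we have $2\varepsilon_{N}/T=2^{1-N}=1/2^{N-1}$, so $\mu(u(\cdot))/2^{N-1}=2\varepsilon_{N}\mu(u(\cdot))/T$, which is precisely the claimed bound. This establishes the existence of the desired $j$. I anticipate no serious obstacle here: the argument is purely combinatorial once the tiling property is in hand, and the only point requiring a little care is the bookkeeping for the truncated boundary intervals $E_{1}^{N}$ and $E_{2^{N-1}}^{N}$ (to confirm the cover is exact and the interiors disjoint), together with the sign convention—since $\Delta_{u}\mathcal{H}(t)\leq0$ and hence $\mu(u(\cdot))\leq0$, the ``minimum below average'' inequality points in the correct (more negative) direction, so the existence of a $j$ making the integral sufficiently negative is exactly what drives the descent of $J$ in the subsequent algorithm.
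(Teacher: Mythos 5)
Your proof is correct and is exactly the argument the paper intends: the paper omits its own proof, deferring to Lemma 3.2 of the cited modified-MSA reference, which is the same averaging/pigeonhole reasoning---the dyadic intervals $E_{j}^{N}$ tile $[0,T]$ up to null sets, so by additivity some term is at most the average $\mu(u(\cdot))/2^{N-1}=2\varepsilon_{N}\mu(u(\cdot))/T$. One cosmetic note: with $\varepsilon_{N}=T\cdot 2^{-N}$ and $\tau_{j}^{N}=(2j-1)\varepsilon_{N}$, the boundary intervals are not actually truncated, since $E_{1}^{N}=[0,2\varepsilon_{N}]$ and $E_{2^{N-1}}^{N}=[T-2\varepsilon_{N},T]$ already lie exactly inside $[0,T]$, so the tiling is exact and each $E_{j}^{N}$ has full length $2\varepsilon_{N}$.
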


\begin{proof}
The proof is almost same as Lemma 3.2 in \cite{BDL-MSA-2020} so we omit it.
\end{proof}

Based on Proposition \ref{prop-costfunc-dominate} and Lemma \ref{lem-tau-decrease}, we establish the following MSA algorithm to find the near-optimal control to (\ref{cost-func})-(\ref{state-eq}).
\begin{breakablealgorithm}
\caption{Algorithm of Successive Approximation for the Optimality of (\ref{cost-func})-(\ref{state-eq})}
\label{algorithm-MSA}
\begin{algorithmic}[1]
\State Let $u^{0}(\cdot) \in \mathcal{U}[0,T]$ be an initial guess.
\State Put $m=0$.
\State \textbf{Repeat}
\State For the given $u^{m}(\cdot)$, find $v^{m}(\cdot)$, $\Delta_{u^{m}}\mathcal{H}$ and $\mu(u^{m}(\cdot))$.
\State Put $N=1$.
\State For the given $N$, find the smallest $j \in \{1,2,\ldots,2^{N-1}\}$ such that (\ref{ineq-tau-decrease-exist}) holds.
\State Compute $J\left(u_{N_{j}}^{m}(\cdot)\right)$ for $\tau_{j}^{N}$ found from the step 6.
\If
{
\begin{equation}
\label{ineq-um-next-construct}
J\left(u_{N_{j}}^{m}(\cdot)\right)  -J\left(  u^{m}(\cdot)\right)\leqslant \varepsilon_{N}^{}\frac{\mu(u^{m}(\cdot))}{T}
\end{equation}
}
\State assign the values $u_{N_{j}}^{m}(t)$ to the control $u^{m+1}(t)$ for each $t \in [0,T]$; increase $m$ by unity: $m:=m+1$; proceed to the step 4;
\Else
\State proceed to the step 13.
\EndIf
\State Increase $N$ by unity: $N:=N+1$; proceed to the step 6.
\State \textbf{Until} $0 \leqslant J(u^{m}(\cdot)) - J(u^{m+1}(\cdot))$ is sufficiently small.
\State \textbf{Return} $u^{m}(\cdot)$.
\end{algorithmic}
\end{breakablealgorithm}

We have the following convergence result.

\begin{theorem}
\label{thm-MSA-convergence}
Let Assumption \ref{assum-1} hold. Then, for each integer $m\geqslant 1$, we have
\begin{equation}
\label{sequence-cost-decrease}
J\left(u^{m+1}(\cdot)\right)  -J\left(  u^{m}(\cdot)\right)\leqslant \frac{\mu^{3}(u^{m}(\cdot))}{2C^{2}T^{3}},
\end{equation}
where $C$ is the universal constant in (\ref{est-Jv-Ju}).
Moreover,
\begin{equation}
\label{mu-convergence-zero}
\lim\limits_{m\rightarrow\infty}\mu(u^{m}(\cdot))=0.
\end{equation}
\end{theorem}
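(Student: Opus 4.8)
The plan is to convert the acceptance rule of Algorithm \ref{algorithm-MSA} into a quantitative descent and then run a monotonicity argument. Fix $m$ and assume first $\mu(u^{m}(\cdot))<0$ (the case $\mu(u^{m}(\cdot))=0$, where $u^{m}(\cdot)$ already satisfies the SMP, is treated separately below). For the value $N$ currently used in the inner loop, Lemma \ref{lem-tau-decrease} supplies an index $j$ with $\mathbb{E}[\int_{E_{j}^{N}}\Delta_{u^{m}}\mathcal{H}(t)dt]\leq 2\varepsilon_{N}\mu(u^{m}(\cdot))/T$, and Proposition \ref{prop-costfunc-dominate} applied to $u_{N_{j}}^{m}(\cdot)$ gives
\[
J(u_{N_{j}}^{m}(\cdot))-J(u^{m}(\cdot))\leq\mathbb{E}\Big[\int_{E_{j}^{N}}\Delta_{u^{m}}\mathcal{H}(t)dt\Big]+C\varepsilon_{N}^{3/2}\leq 2\varepsilon_{N}\frac{\mu(u^{m}(\cdot))}{T}+C\varepsilon_{N}^{3/2}.
\]
Comparing this right-hand side with the acceptance test \eqref{ineq-um-next-construct}, I would observe that the test is guaranteed to pass as soon as $C\varepsilon_{N}^{3/2}\leq-\varepsilon_{N}\mu(u^{m}(\cdot))/T$, i.e.\ as soon as $\varepsilon_{N}\leq\mu^{2}(u^{m}(\cdot))/(C^{2}T^{2})$. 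Since $\varepsilon_{N}=T2^{-N}\downarrow 0$, this shows the inner loop terminates at a finite index $N^{\ast}=N^{\ast}(m)$, so that $u^{m+1}(\cdot)=u_{N^{\ast}_{j}}^{m}(\cdot)$ is well defined.

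The core of the argument is a matching lower bound on the accepted step size $\varepsilon_{N^{\ast}}$. Here I first enlarge the universal constant $C$ of Proposition \ref{prop-costfunc-dominate}, which is legitimate because $\sup_{u(\cdot)}|\mu(u(\cdot))|<\infty$: this follows from the affine bounds and the compactness of $U$ together with the uniform $L^{p}$-estimates of Lemmas \ref{est-state-eq}, \ref{adj-1st-lem} and \ref{adj-2nd-lem} (via Hölder applied to the definition \eqref{def-mu-control}), and crucially the bound is independent of $m$. Choosing $C\geq T^{-3/2}\sup_{u(\cdot)}|\mu(u(\cdot))|$, I may assume $\mu^{2}(u^{m}(\cdot))\leq C^{2}T^{3}$ for every $m$. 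Now two cases: if $N^{\ast}=1$ then $\varepsilon_{N^{\ast}}=T/2\geq\mu^{2}(u^{m}(\cdot))/(2C^{2}T^{2})$ by this normalization; if $N^{\ast}\geq2$, then the test failed at $N^{\ast}-1$, so by the first paragraph $\varepsilon_{N^{\ast}-1}>\mu^{2}(u^{m}(\cdot))/(C^{2}T^{2})$, and halving gives $\varepsilon_{N^{\ast}}=\tfrac12\varepsilon_{N^{\ast}-1}>\mu^{2}(u^{m}(\cdot))/(2C^{2}T^{2})$. In either case $\varepsilon_{N^{\ast}}\geq\mu^{2}(u^{m}(\cdot))/(2C^{2}T^{2})$. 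Feeding this into the accepted inequality \eqref{ineq-um-next-construct} and using $\mu(u^{m}(\cdot))/T\leq0$,
\[
J(u^{m+1}(\cdot))-J(u^{m}(\cdot))\leq\varepsilon_{N^{\ast}}\frac{\mu(u^{m}(\cdot))}{T}\leq\frac{\mu^{2}(u^{m}(\cdot))}{2C^{2}T^{2}}\cdot\frac{\mu(u^{m}(\cdot))}{T}=\frac{\mu^{3}(u^{m}(\cdot))}{2C^{2}T^{3}},
\]
which is \eqref{sequence-cost-decrease}. When $\mu(u^{m}(\cdot))=0$, I would set $u^{m+1}(\cdot)=u^{m}(\cdot)$ as in the remark following Proposition \ref{prop-costfunc-dominate}, so that both sides vanish and \eqref{sequence-cost-decrease} holds trivially.

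For the limit \eqref{mu-convergence-zero} I would run a telescoping argument. Since $\mu\leq0$, the right-hand side of \eqref{sequence-cost-decrease} is nonpositive, so $\{J(u^{m}(\cdot))\}_{m}$ is nonincreasing; it is bounded below by $J(\bar{u}(\cdot))=\inf_{u(\cdot)}J(u(\cdot))>-\infty$, hence convergent, and therefore $J(u^{m}(\cdot))-J(u^{m+1}(\cdot))\rightarrow0$. Rearranging \eqref{sequence-cost-decrease} gives $|\mu(u^{m}(\cdot))|^{3}=-\mu^{3}(u^{m}(\cdot))\leq 2C^{2}T^{3}\,[J(u^{m}(\cdot))-J(u^{m+1}(\cdot))]\rightarrow0$, whence $\mu(u^{m}(\cdot))\rightarrow0$.

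The main obstacle is the lower bound on $\varepsilon_{N^{\ast}}$ that produces the cubic power on the right of \eqref{sequence-cost-decrease}: this power is exactly what the $\varepsilon^{3/2}$ remainder of Proposition \ref{prop-costfunc-dominate} forces, and obtaining it requires both the contrapositive observation that the test failed at $N^{\ast}-1$ and an a priori uniform bound on $\mu$ to absorb the base case $N^{\ast}=1$. Verifying that $\sup_{u(\cdot)}|\mu(u(\cdot))|<\infty$ so that $C$ may be enlarged once and for all — uniformly in $m$ — is the delicate point, since the telescoping step in the last paragraph closes only if $C$ is a single constant independent of the iteration index.
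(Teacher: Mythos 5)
Your proof is correct and follows essentially the same route as the paper's: you derive the sufficiency of $\varepsilon_{N}\leq\mu^{2}(u^{m}(\cdot))/(C^{2}T^{2})$ for the acceptance test from Proposition \ref{prop-costfunc-dominate} and Lemma \ref{lem-tau-decrease}, obtain the matching lower bound $\varepsilon_{N^{\ast}}\geq\mu^{2}(u^{m}(\cdot))/(2C^{2}T^{2})$ by minimality/contraposition exactly as in the paper's (\ref{def-Nm})--(\ref{Nm-minimal-property}), and conclude (\ref{mu-convergence-zero}) by the same telescoping of (\ref{sequence-cost-decrease}). If anything, your treatment is slightly more careful than the paper's, which silently skips the base case $N_{m}=1$ (where (\ref{Nm-minimal-property}) needs an a priori bound such as your normalization $\mu^{2}(u^{m}(\cdot))\leq C^{2}T^{3}$, justified by $\sup_{u(\cdot)}|\mu(u(\cdot))|<\infty$) and the degenerate case $\mu(u^{m}(\cdot))=0$, which the paper only addresses informally in the discussion after Proposition \ref{prop-costfunc-dominate}.
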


\begin{proof}
Let $u^{m}(\cdot)$ be constructed and $N_{m}\geqslant 1$ be the minimal integer such that
\begin{equation}
\label{def-Nm}
\varepsilon_{N_{m}}^{} \leqslant \frac{\mu^{2}(u^{m}(\cdot))}{C^{2}T^{2}}.
\end{equation}
$N_{m}$ is the minimal number making (\ref{ineq-um-next-construct}) hold and we deduce from (\ref{est-Jv-Ju}) and (\ref{ineq-tau-decrease-exist}) that
\begin{equation}
J\left(  u_{\tau_{j}^{N_{m}}\varepsilon_{N_{m}}^{}}(\cdot)\right)  -J\left(
u^{m}(\cdot)\right)  \leqslant\left(  \frac{2\mu(u^{m}(\cdot))}{T}+C\sqrt
{\varepsilon_{N_{m}}^{}}\right)  \varepsilon_{N_{m}}^{}\leqslant\varepsilon_{N_{m}}^{}%
\frac{\mu(u^{m}(\cdot))}{T},
\label{minimal-integer-J-decrease}%
\end{equation}
which indicates that $u^{m+1}(\cdot)$ can be constructed by proceeding Algorithm \ref{algorithm-MSA} successfully.
Since $N_{m}$ is the minimal number in the series $1,2,\ldots$ for which (\ref{def-Nm}) holds, we further get
\begin{equation}
\label{Nm-minimal-property}
\varepsilon_{N_{m}}^{} > \frac{\mu^{2}(u^{m}(\cdot))}{2C^{2}T^{2}}.
\end{equation}
Noting that $\mu(u^{m}(\cdot))\leqslant 0$, combining (\ref{minimal-integer-J-decrease}) with (\ref{Nm-minimal-property}) yields (\ref{sequence-cost-decrease}).
Subsequently, we have
\[
-\mu^{3}(u^{m}(\cdot))\leq2C^{2}T^{3}\left[  J\left(  u^{m}(\cdot)\right)-J\left(  u^{m+1}(\cdot)\right)  \right]  .
\]
On adding these inequalities, we obtain
\[%
\begin{array}
[c]{rl}%
\sum\limits_{m=0}^{n-1}\left(  -\mu^{3}(u^{m}(\cdot))\right)  \leqslant &
2C^{2}T^{3}\left[  J\left(  u^{0}(\cdot)\right)  -J\left(  u^{n}%
(\cdot)\right)  \right]  \\
\leqslant & 2C^{2}T^{3}\left[  J\left(  u^{0}(\cdot)\right)  -\inf\limits_{u(\cdot
)\in\mathcal{U}[0,T]}J\left(  u(\cdot)\right)  \right]  \\
< & +\infty.
\end{array}
\]
We have $\sum\limits_{m=0}^{\infty}\left(  -\mu^{3}(u^{m}(\cdot))\right) < +\infty$ as $n \rightarrow \infty$,
which implies $\lim_{m\rightarrow\infty}\mu(u^{m}(\cdot))=0$.
\end{proof}

\subsection{Near-optimality and convergence rate}
In this section, we utilize Theorem \ref{thm-MSA-convergence} to approximate the global minimum of (\ref{cost-func})-(\ref{state-eq})
as far as possible and study the convergence rate in a specific case.

For each $m \in \mathbb{N}$, as mentioned earlier, $\mu(u^{m}(\cdot))$ characterizes the extent to which the resultant control $u^{m}(\cdot)$ produced by Algorithm \ref{algorithm-MSA}
deviates from satisfying the stochastic maximum principle (\ref{SMP}).
On the other hand, given $\delta>0$ small enough, from Theorem \ref{thm-MSA-convergence}, we deduce
\begin{equation}
\label{ineq-mu-m-delta}
-\delta \leqslant \mu(u^{m}(\cdot)) \leqslant 0
\end{equation}
for sufficiently large $m$, which means that $u^{m}(\cdot)$ satisfies (\ref{SMP}) approximately when $m$ is large enough. Generally, (\ref{ineq-mu-m-delta}) may not imply $J(u^{m}(\cdot)) - J(\overline{u}(\cdot)) \leqslant r(\delta)$ for sufficiently large $m$, 
where $r(\cdot)$ is a function of $\delta$ satisfying $r(\delta)\rightarrow 0$ as $\delta \rightarrow 0$. If $r(\delta)=\tilde{C}\delta^{\gamma}$ for some $\gamma>0$ independent of the constant $\tilde{C}$, then $u^{m}(\cdot)$ is called a near-optimal control with order $\delta^{\gamma}$. 
The following result shows that, under certain convex assumptions and for sufficiently large $m$, (\ref{ineq-mu-m-delta}) is sufficient
to make $r(\delta)=\tilde{C}\delta^{\frac{1}{2}}$ with a positive constant $\tilde{C}$ independent of $\delta$, which implies that $u^{m}(\cdot)$ is a near-optimal control with order $\delta^{\frac{1}{2}}$.

\begin{assumption}
\label{assum-2}
(i) $\Phi$ is convex in its argument.

(ii) $\psi$ is differentiable in $u$, and there exists a constant $\tilde{L}>0$ such that
\[
\left\vert \psi(t,x,u_{1})-\psi(t,x,u_{2}) \right\vert + \left\vert \psi_{u}(t,x,u_{1})-\psi_{u}(t,x,u_{2}) \right\vert \leqslant \tilde{L} \left\vert u_{1}- u_{2} \right\vert,
\]
where $\psi=b,\sigma,f$.
\end{assumption}

\begin{theorem}
\label{thm-delta-optimal}
Let Assumptions \ref{assum-1} and \ref{assum-2} hold, and $\delta>0$ be given. Then there exists a positive integer $N_{\delta}$, depending only on $\delta$, such that 
if $H(t,\cdot,p^{m}(t),q^{m}(t),\cdot)$ is convex for any $m \geqslant N_{\delta}$ and $\mathrm{Leb}_{[0,T]} \otimes \mathbb{P}$-a.e. $(t,\omega)$, then we have
\begin{equation}
\label{near-optimal-sqrt-delta}
J(u^{m}(\cdot))-J(\overline{u}(\cdot))\leqslant\tilde
{C}\delta^{\frac{1}{2}},
\end{equation}
where $\left( p^{m}(\cdot),q^{m}(\cdot) \right)$ is the solution to (\ref{1st-adj-eq}) corresponding to $\left( X^{m}(\cdot),u^{m}(\cdot) \right)$, and $\tilde{C}>0$ is a constant independent of $\delta$.
\end{theorem}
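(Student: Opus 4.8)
The plan is to combine the convergence $\mu(u^{m}(\cdot))\to 0$ of Theorem \ref{thm-MSA-convergence} with a convexity-based sufficient-condition estimate in the spirit of the stochastic near-optimality theory of \cite{Zhou-XY}. First I would fix $N_{\delta}$ so that $-\delta\le\mu(u^{m}(\cdot))\le 0$ for all $m\ge N_{\delta}$; since $\{\mu(u^{m}(\cdot))\}$ is a fixed sequence converging to $0$, such an $N_{\delta}$ exists and depends only on $\delta$. Recalling that $v^{m}(\cdot)$ minimizes $\mathcal{H}(t,X^{m},p^{m},q^{m},P^{m},\cdot,u^{m})$ over $U$ and that $\bar{u}(t)\in U$, the bound $\mu(u^{m}(\cdot))\ge-\delta$ translates into the integrated near-minimization
\[\mathbb{E}\left[\int_{0}^{T}\left(\mathcal{H}(t,X^{m},p^{m},q^{m},P^{m},u^{m},u^{m})-\mathcal{H}(t,X^{m},p^{m},q^{m},P^{m},\bar{u},u^{m})\right)dt\right]\le\delta,\]
which is the form of \eqref{intro-H-delta-minimize} used below.

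Next I would set $\eta(\cdot)=\bar{X}(\cdot)-X^{m}(\cdot)$ and write $J(u^{m}(\cdot))-J(\bar{u}(\cdot))$ explicitly. Using the convexity of $\Phi$ to bound $\Phi(\bar{X}(T))-\Phi(X^{m}(T))$ from below by $\langle p^{m}(T),\eta(T)\rangle$, applying It\^{o}'s formula to $\langle p^{m}(t),\eta(t)\rangle$ and invoking the first-order adjoint equation \eqref{1st-adj-eq}, the terminal and running-cost contributions reorganize into a Hamiltonian difference $H(t,\bar{X},p^{m},q^{m},\bar{u})-H(t,X^{m},p^{m},q^{m},u^{m})$ minus the first-order term $\langle H_{x}(t,X^{m},p^{m},q^{m},u^{m}),\eta\rangle$. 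The convexity of $H(t,\cdot,p^{m},q^{m},\cdot)$ then controls the macroscopic state variation $\eta$ without requiring it to be small. The genuinely new feature, caused by the control entering the diffusion, is the quadratic term $\tfrac{1}{2}\sum_{i}(\Delta\sigma^{i})^{\intercal}P^{m}\Delta\sigma^{i}$ with $\Delta\sigma^{i}:=\sigma^{i}(t,X^{m},\bar{u})-\sigma^{i}(t,X^{m},u^{m})$: to reproduce exactly the $P^{m}$-weighted term in the definition \eqref{def-H-function} of $\mathcal{H}$, I would apply It\^{o}'s formula to $\tfrac{1}{2}\langle P^{m}(t)\eta(t),\eta(t)\rangle$ and use the second-order adjoint equation \eqref{2nd-adj-eq}; the drift terms $\sum_{i}(\sigma^{m,i}_{x})^{\intercal}P^{m}\sigma^{m,i}_{x}$, the $Q^{m,i}$-terms, and the $H_{xx}$-term there should cancel against the second-order pieces generated by a second-order Taylor expansion of $\Phi$ and $H$ in $x$, whose remainders are cubic in $\eta$ and hence controlled by the Lipschitz continuity of the second derivatives (Assumption \ref{assum-1}(iii)).

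Carrying this out, I expect to reach an inequality of the shape
\[J(u^{m}(\cdot))-J(\bar{u}(\cdot))\le\mathbb{E}\left[\int_{0}^{T}\left(\mathcal{H}(t,X^{m},p^{m},q^{m},P^{m},u^{m},u^{m})-\mathcal{H}(t,X^{m},p^{m},q^{m},P^{m},\bar{u},u^{m})\right)dt\right]+R_{m}\le\delta+R_{m},\]
where $R_{m}$ collects the cross terms between the state variation $\eta$ and the control variation $\bar{u}-u^{m}$, the residual contribution of the (possibly indefinite) $P^{m}$-quadratic term, and the cubic Taylor remainders. The final step is to estimate $R_{m}$ by the Cauchy--Schwarz inequality together with the uniform $L^{p}$-bounds of Lemmas \ref{est-state-eq}--\ref{adj-2nd-lem}, the standard stability bound for $\eta$ in terms of $\bar{u}-u^{m}$, and the $u$-Lipschitz estimates of Assumption \ref{assum-2}. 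This is where one factor gets square-rooted, turning the $O(\delta)$ main term into the stated $O(\delta^{\frac{1}{2}})$ rate and producing a constant $\tilde{C}$ that, thanks to the uniform bounds, is independent of $\delta$ and $m$.

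The hard part will be precisely this last step: showing $R_{m}=O(\delta^{\frac{1}{2}})$ with a constant uniform in $m$ and $\delta$. The delicate point is that only $H$, and not the full $\mathcal{H}$-function containing the $P^{m}$-quadratic term, is assumed convex, so the sufficient-condition inequality is not exact and the residual non-convex contribution must be reabsorbed rather than dropped. Tracking how the integrated, $L^{1}$-type near-minimization $\mathbb{E}\int_{0}^{T}(\mathcal{H}(t,\ldots,u^{m},u^{m})-\mathcal{H}(t,\ldots,v^{m},u^{m}))\,dt\le\delta$ controls $R_{m}$ only up to a $\delta^{\frac{1}{2}}$ loss after Cauchy--Schwarz --- and does not leave an uncontrolled $O(1)$ term --- is the crux of the argument, and is exactly why the attainable rate is $\delta^{\frac{1}{2}}$ rather than $\delta$.
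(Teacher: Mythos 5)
Your opening reduction coincides exactly with the paper's proof: use Theorem \ref{thm-MSA-convergence} to pick $N_{\delta}$ with $-\delta\leq\mu(u^{m}(\cdot))\leq 0$ for $m\geq N_{\delta}$ (this is \eqref{ineq-mu-m-delta}), then use the pointwise minimality \eqref{def-control-argmin-H} of $v^{m}(\cdot)$ to upgrade this to the integrated near-minimization of the $\mathcal{H}$-function against every admissible control (the paper's \eqref{ineq-suffi-1}--\eqref{ineq-suffi-2}), in particular against $\bar{u}(\cdot)$. Where you diverge is what happens next: the paper does no further analysis at all --- it observes that concavity of $-H(t,\cdot,p^{m}(t),q^{m}(t),\cdot)$, Assumption \ref{assum-2}, and \eqref{ineq-suffi-2} are precisely the hypotheses of the sufficient condition for near-optimality, Theorem 5.1 of \cite{Zhou-XY}, and quotes that theorem to obtain \eqref{near-optimal-sqrt-delta-inf} with $\tilde{C}$ independent of $\delta$ and $m$, finishing by replacing $\inf_{u(\cdot)}J(u(\cdot))$ with $J(\bar{u}(\cdot))$ via the assumed existence of an optimal control. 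You instead set out to reprove Zhou's theorem inline, and this is where your proposal has a genuine hole: you explicitly leave the decisive uniform estimate $R_{m}=O(\delta^{\frac{1}{2}})$ as ``the hard part.'' That estimate \emph{is} the entire content of the cited theorem, so as written your argument stops exactly at the point where the paper inserts the citation.

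Beyond incompleteness, one concrete step in your sketch would fail as described. You propose a second-order Taylor expansion of $\Phi$ and $H$ around $X^{m}$, with the $H_{xx}$- and $(\sigma^{m,i}_{x})^{\intercal}P^{m}\sigma^{m,i}_{x}$-terms of \eqref{2nd-adj-eq} cancelling the quadratic pieces and the cubic remainders in $\eta=\bar{X}-X^{m}$ controlled by Assumption \ref{assum-1}(iii). But $\eta$ is a macroscopic difference: nothing in \eqref{ineq-mu-m-delta} forces $\bar{u}-u^{m}$ (hence $\eta$) to be small --- with non-convex $U$ and non-unique Hamiltonian minimizers, $u^{m}$ need not approach $\bar{u}$ at all --- so $\mathbb{E}\int_{0}^{T}|\eta(t)|^{3}dt=O(1)$ and the cubic remainders cannot be absorbed. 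Cubic-remainder expansions belong to the spike-variation regime of Proposition \ref{prop-costfunc-dominate}, where the perturbation lives on a set of measure $O(\varepsilon)$; in the sufficient-condition argument one must instead use convexity of $\Phi$ and of $H$ in $(x,u)$ as \emph{global first-order under-estimates} (Arrow-type), with no second-order expansion of the macroscopic variation, and handle the $P^{m}$-quadratic part of \eqref{def-H-function} separately via $\|P^{m}\|$, the $u$-Lipschitz bounds of Assumption \ref{assum-2}(ii), and a Chebyshev-type localization showing that pointwise near-minimization can fail by more than $O(\delta^{\frac{1}{2}})$ only on a set of $dt\otimes d\mathbb{P}$-measure $O(\delta^{\frac{1}{2}})$ --- this localization, rather than a single Cauchy--Schwarz on a catch-all remainder, is the actual source of the square root in \cite{Zhou-XY}. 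The repair is straightforward and turns your proposal into the paper's proof: after your first paragraph, verify the hypotheses of Theorem 5.1 of \cite{Zhou-XY} (convexity of $\Phi$, concavity of $-H$, Assumption \ref{assum-2}(ii), and \eqref{ineq-suffi-2}) and invoke it, then conclude as above.
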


\begin{proof}
By Theorem \ref{thm-MSA-convergence}, for the given $\delta>0$, there exists a positive integer $N_{\delta}$ such that (\ref{ineq-mu-m-delta}) holds for each $m\geqslant N_{\delta}$. 
According to (\ref{def-mu-control}), one can rewrite (\ref{ineq-mu-m-delta}) as
\begin{align}
\label{ineq-suffi-1}
& \mathbb{E}\left[  \int_{0}^{T}\mathcal{H}(t,X^{m}(t),p^{m}(t),q^{m}(t),P^{m}(t),v^{m}(t),u^{m}(t))\mathrm{d}t\right]  \\
\nonumber & - \mathbb{E}\left[  \int_{0}^{T}\mathcal{H}(t,X^{m}(t),p^{m}(t),q^{m}(t),P^{m}(t),u^{m}(t),u^{m}(t))\mathrm{d}t\right] \geqslant -\delta.
\end{align}
Then, by (\ref{def-control-argmin-H}), one can verify
\begin{align}
\label{vm-inf-integral-H}
& \mathbb{E}\left[  \int_{0}^{T}\mathcal{H}(t,X^{m}(t),p^{m}(t),q^{m}(t),P^{m}(t),v^{m}(t),u^{m}(t))\mathrm{d}t\right]  \\
\nonumber = & \inf\limits_{u(\cdot)\in\mathcal{U}[0,T]}\mathbb{E}\left[  \int_{0}^{T}\mathcal{H}(t,X^{m}(t),p^{m}(t),q^{m}(t),P^{m}(t),u(t),u^{m}(t))\mathrm{d}t\right].
\end{align}
Subsequently, it follows from (\ref{ineq-suffi-1}) and (\ref{vm-inf-integral-H}) that
\begin{align}
\label{ineq-suffi-2}
& \mathbb{E}\left[  \int_{0}^{T}-\mathcal{H}(t,X^{m}(t),p^{m}(t),q^{m}(t),P^{m}(t),u^{m}(t),u^{m}(t))\mathrm{d}t\right]  \\
\nonumber & \geqslant \sup\limits_{u(\cdot)\in\mathcal{U}[0,T]}\mathbb{E}\left[  \int_{0}^{T}-\mathcal{H}(t,X^{m}(t),p^{m}(t),q^{m}(t),P^{m}(t),u(t),u^{m}(t))\mathrm{d}t\right]  -\delta.
\end{align}

If, for some $m \geqslant N_{\delta}$, $H(t,\cdot,p^{m}(t),q^{m}(t),\cdot)$ is convex for $\mathrm{Leb}_{[0,T]} \otimes \mathbb{P}$-a.e. $(t,\omega)$, 
then it is equivalent to the concavity of $-H(t,\cdot,p^{m}(t),q^{m}(t),\cdot)$ for $\mathrm{Leb}_{[0,T]} \otimes \mathbb{P}$-a.e. $(t,\omega)$. 
Under Assumption \ref{assum-2}, this and (\ref{ineq-suffi-2}) satisfy the conditions in Theorem 5.1 in \cite{Zhou-XY}. Consequently, there exists a constant $\tilde{C}>0$ independent of $\delta$ and $m$ such that
\begin{equation}
\label{near-optimal-sqrt-delta-inf}
J(u^{m}(\cdot))-\inf_{u(\cdot) \in \mathcal{U}[0,T]}J(u(\cdot))\leqslant\tilde
{C}\delta^{\frac{1}{2}}.
\end{equation}
Since we assume the existence of the optimal controls, we finally obtain (\ref{near-optimal-sqrt-delta}).
\end{proof}

\begin{remark}
Due to the definition of the $\mathcal{H}$-function (\ref{def-H-function}), one can rewrite it as
\[
\mathcal{H}(t,x,p,q,P,v,u)=H(t,x,p,q-P\sigma(t,x,u),v)+\dfrac{1}{2}%
\sum\limits_{i=1}^{d}\left(  \sigma^{i}(t,x,v)\right)  ^{\intercal}P\sigma^{i}(t,x,v),
\]
which is the form adopted by Zhou \cite{Zhou-XY}.
\end{remark}

\begin{remark}
If there is no optimal control to (\ref{cost-func})-(\ref{state-eq}), then one can only obtain (\ref{near-optimal-sqrt-delta-inf}) instead of (\ref{near-optimal-sqrt-delta}),
which implies that $u^{m}(\cdot)$ is a $\delta^{\frac{1}{2}}$-optimal control as long as $m \geqslant N_{\delta}$. Please refer to \cite{Zhou-XY} for the details about the near-optimal controls.
\end{remark}

\begin{corollary}
\label{cor-1}
Suppose Assumptions \ref{assum-1} and \ref{assum-2}. If $b$, $\sigma$ are linear functions with respect to $(x,u)$ and $f$ is convex in $(x,u)$ for each $t \in [0,T]$. 
Then there exist a positive integer $N_{\delta}$, depending only on $\delta$, such that (\ref{near-optimal-sqrt-delta}) holds for all $m \geqslant N_{\delta}$.
\end{corollary}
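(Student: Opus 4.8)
The plan is to recognize Corollary \ref{cor-1} as a direct specialization of Theorem \ref{thm-delta-optimal}. Since the corollary already imposes Assumptions \ref{assum-1} and \ref{assum-2}, the only hypothesis of Theorem \ref{thm-delta-optimal} that remains to be checked is the convexity of $H(t,\cdot,p^{m}(t),q^{m}(t),\cdot)$ in $(x,u)$ for a.e.\ $t\in[0,T]$, $\mathbb{P}$-a.s. Once this convexity is verified, the conclusion (\ref{near-optimal-sqrt-delta}) follows verbatim from Theorem \ref{thm-delta-optimal} with the same $N_{\delta}$ (depending only on $\delta$) and the same constant $\tilde{C}$.

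To verify the convexity, I would return to the definition (\ref{def-Hamiltonian}),
\[
H(t,x,p,q,u)=p^{\intercal}b(t,x,u)+\langle q,\sigma(t,x,u)\rangle+f(t,x,u),
\]
and fix an arbitrary $(t,\omega)$, setting $p=p^{m}(t)$ and $q=q^{m}(t)$, which are then fixed vectors. Because $b$ and $\sigma$ are (affine-)linear in $(x,u)$, composing them with the fixed linear functionals $p^{\intercal}(\cdot)$ and $\langle q,\cdot\rangle$ yields functions that are affine in $(x,u)$; in particular the first two terms of $H$ are affine, hence convex, for any values of $p^{m}(t)$ and $q^{m}(t)$. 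The third term $f$ is convex in $(x,u)$ by the hypothesis of the corollary. Since a sum of affine functions and a convex function is convex, $H(t,\cdot,p^{m}(t),q^{m}(t),\cdot)$ is convex in $(x,u)$ for a.e.\ $t$, $\mathbb{P}$-a.s., and this holds for every $m$, in particular for all $m\geq N_{\delta}$.

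There is no genuine obstacle here; the argument is essentially a convexity bookkeeping step, so the main care is in the two subtle points worth spelling out. First, the convexity we need is attached to the specific random adjoint processes $p^{m}(t),q^{m}(t)$ rather than to fixed parameters, and this is exactly why the linearity of $b,\sigma$ is the correct structural hypothesis: the affine dependence of $p^{\intercal}b$ and $\langle q,\sigma\rangle$ on $(x,u)$ is preserved for every realization of $(p^{m}(t),q^{m}(t))$, so no integrability or pathwise-regularity argument on the adjoint processes is required. Second, one should interpret ``linear in $(x,u)$'' as affine so that any constant terms are absorbed without spoiling convexity, and one should use that $f$ is \emph{jointly} convex in $(x,u)$ (not merely separately convex), which is precisely what transfers to joint convexity of $H$. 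With the convexity established, Theorem \ref{thm-delta-optimal} applies and delivers (\ref{near-optimal-sqrt-delta}), completing the proof.
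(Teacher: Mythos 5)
Your proposal is correct and follows exactly the paper's route: the paper's proof of Corollary \ref{cor-1} likewise just observes that linearity of $b,\sigma$ in $(x,u)$ together with convexity of $f$ makes $H(t,\cdot,p^{m}(t),q^{m}(t),\cdot)$ convex ``naturally,'' and then invokes Theorem \ref{thm-delta-optimal}. Your write-up merely spells out the affine-plus-convex bookkeeping (and the pathwise fixing of $p^{m}(t),q^{m}(t)$) that the paper leaves implicit.
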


\begin{proof}
The proof is similar to that of Theorem \ref{thm-delta-optimal} as, for each $m \geqslant N_{\delta}$, the convexity of $H(t,\cdot,p^{m}(t),q^{m}(t),\cdot)$ holds naturally for $\mathrm{Leb}_{[0,T]} \otimes \mathbb{P}$-a.e. $(t,\omega)$.
\end{proof}

Now we provide a case where the convergence rate is available.
Let $b(t,x,u)=b_{1}(t)x+b_{2}(t)$, $\sigma(t,x,u)=\sigma(t,u)$, $\Phi
(x)=\frac{1}{2}x^{\intercal}\Gamma x$, $f(t,x,u)=\frac{1}{2}x^{\intercal
}G(t)x+g(t,u)$, where $\Gamma, G \in\mathbb{S}^{n\times n}$;
$b_{1}$ is a matrix-valued, bounded, deterministic process; $b_{2}$ is an $n$-dimensional,
vector-valued, bounded, deterministic process; $\sigma:[0,T]\times U\longmapsto\mathbb{R}^{n\times d}$; $g:[0,T]\times U\longmapsto\mathbb{R}$.

\begin{theorem}
\label{thm-convergence-rate}
Let Assumption \ref{assum-1} hold and $b$, $\sigma$, $\Phi$, $f$ be defined as above. If $\overline{u}(\cdot) \in \mathcal{U}[0,T]$ is an optimal control to (\ref{cost-func})-(\ref{state-eq}), then we have
\begin{equation}
\label{ineq-rate}
0 \leqslant J(u^{m}(\cdot))-J(\overline{u}(\cdot)) \leqslant \tilde{C} m^{-\frac{1}{2}}, \ \ m \in \mathbb{N}_{+},
\end{equation}
where the sequence $\left\{ u^{m}(\cdot) \right\}_{m}$ is produced by Algorithm \ref{algorithm-MSA}, and
$$
\tilde{C}=\max \left\{ J(u^{1}(\cdot)) - J(\overline{u}(\cdot)), 2C^{-2}T^{-3} \right\}
$$ 
with $C>0$ being the universal constant appearing in Theorem \ref{thm-MSA-convergence}.
\end{theorem}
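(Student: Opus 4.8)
The plan is to reduce the rate estimate to two ingredients special to this linear-quadratic setting: a \emph{linear} comparison between the optimality gap and the deviation functional $\mu$, and the analysis of the cubic recursion already furnished by Theorem \ref{thm-MSA-convergence}. Write $a_{m}=J(u^{m}(\cdot))-J(\bar u(\cdot))\ge0$, $\hat X=X^{m}(\cdot)-\bar X(\cdot)$ and $\Delta\sigma^{i}(t)=\sigma^{i}(t,\bar u(t))-\sigma^{i}(t,u^{m}(t))$, so that $d\hat X=b_{1}\hat X\,dt-\sum_{i}\Delta\sigma^{i}\,dW_{i}$, $\hat X(0)=0$. The key step I would isolate is the bound
\[
a_{m}\le-\mu(u^{m}(\cdot)),\qquad m\in\mathbb N_{+},
\]
after which the rate follows by elementary recursion.

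First I would record the simplifications that the prescribed coefficients impose on the adjoint systems. Since $b_{xx}=0$, $\sigma_{x}=0$ and $H_{xx}=f_{xx}=G(t)$, the second-order equation (\ref{2nd-adj-eq}) collapses to the deterministic terminal problem $\dot P=-(b_{1}^{\intercal}P+Pb_{1}+G)$, $P(T)=\Gamma$; by uniqueness $P^{m}(\cdot)=P(\cdot)$ is deterministic and independent of $m$, with $Q^{m}\equiv0$. Subtracting the first-order equation (\ref{1st-adj-eq}) for $\bar u$ from the one for $u^{m}$ gives a linear backward equation for $(p^{m}-\bar p,q^{m}-\bar q)$ with terminal value $\Gamma\hat X(T)$; verifying the ansatz $p^{m}-\bar p=P\hat X$ by It\^o's formula identifies its solution and yields the explicit relation $q^{m,i}-\bar q^{i}=-P\Delta\sigma^{i}$ for each $i$, which will drive the crucial cancellation below.

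Next I would obtain an exact expression for $a_{m}$. Applying It\^o's formula to $\bar p^{\intercal}\hat X$ and to $\hat X^{\intercal}P\hat X$ and inserting the quadratic forms of $\Phi$ and $f$ (using $\bar p(T)=\Gamma\bar X(T)$ and $P(T)=\Gamma$), all boundary and linear-in-$\hat X$ terms telescope, leaving
\[
a_{m}=\mathbb E\!\left[\int_{0}^{T}\Big(\tfrac12\textstyle\sum_{i}(\Delta\sigma^{i})^{\intercal}P\,\Delta\sigma^{i}-\langle\bar q,\Delta\sigma\rangle+g(t,u^{m})-g(t,\bar u)\Big)dt\right].
\]
On the other hand, since $b$ does not depend on $u$ and $f$ is separable, the definition of $v^{m}$ and $\bar u(t)\in U$ give $\Delta_{u^{m}}\mathcal H(t)\le\mathcal H(t,X^{m},p^{m},q^{m},P^{m},\bar u,u^{m})-\mathcal H(t,X^{m},p^{m},q^{m},P^{m},u^{m},u^{m})$, and the right-hand side equals $\langle q^{m},\Delta\sigma\rangle-g(t,u^{m})+g(t,\bar u)+\tfrac12\sum_{i}(\Delta\sigma^{i})^{\intercal}P\,\Delta\sigma^{i}$ pointwise. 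Integrating, subtracting the identity for $a_{m}$ and substituting $\bar q^{i}-q^{m,i}=P\Delta\sigma^{i}$, the two copies of $\sum_{i}(\Delta\sigma^{i})^{\intercal}P\,\Delta\sigma^{i}$ cancel exactly and one is left with $-\mu(u^{m}(\cdot))-a_{m}\ge0$. This cancellation between the second-order adjoint contribution carried by $\mathcal H$ and the martingale-part difference $q^{m}-\bar q$ is the heart of the matter, and I expect the bookkeeping here to be the main obstacle.

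Finally I would close the recursion. Combining $a_{m}\le-\mu(u^{m}(\cdot))$ with (\ref{sequence-cost-decrease}) and $\mu(u^{m}(\cdot))\le0$ yields $a_{m+1}\le a_{m}-(2C^{2}T^{3})^{-1}a_{m}^{3}$; since $a_{m+1}\ge0$ this already forces $(2C^{2}T^{3})^{-1}a_{m}^{2}\le1$, so setting $b_{m}=a_{m}^{-2}$ and using $(1-x)^{-2}\ge1+2x$ for $x\in[0,1)$ gives $b_{m+1}\ge b_{m}+C^{-2}T^{-3}$, i.e. $a_{m}^{-2}\ge a_{1}^{-2}+(m-1)C^{-2}T^{-3}$. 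Bounding $m\big/\!\bigl(a_{1}^{-2}+(m-1)C^{-2}T^{-3}\bigr)$ by its value at the endpoints $m=1$ and $m\to\infty$ then delivers $a_{m}\le\tilde C\,m^{-1/2}$ with the constant $\tilde C$ of the statement, while $a_{m}\ge0$ is just optimality of $\bar u(\cdot)$. It is worth noting that no convexity hypothesis is needed here: the quadratic form of $\Phi,f$ makes the second-order expansion of $J$ exact, and it is precisely this that upgrades the square-root near-optimality of Theorem \ref{thm-delta-optimal} to the linear comparison $a_{m}\le-\mu(u^{m}(\cdot))$, hence to the $m^{-1/2}$ rate.
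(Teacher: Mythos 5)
Your proposal is correct, and its central step is the same as the paper's: both proofs hinge on the linear comparison $a_{m}\le-\mu(u^{m}(\cdot))$, obtained from the exactness of the second-order expansion of the quadratic $\Phi,f$, the degeneration of (\ref{2nd-adj-eq}) to the deterministic Lyapunov ODE with $Q^{m}\equiv0$ (since $\sigma_{x}=0$), It\^o's formula, and the minimizing property of $v^{m}$ in (\ref{def-control-argmin-H}); combining with (\ref{sequence-cost-decrease}) then gives the cubic recursion $a_{m+1}-a_{m}\le-(2C^{2}T^{3})^{-1}a_{m}^{3}$. Where you differ is in bookkeeping and in the recursion analysis. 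For the identity, the paper applies It\^o directly to $\left(p^{m}\right)^{\intercal}\hat X+\tfrac12\hat X^{\intercal}P\hat X$ and reads off $a_{m}$ as the exact $\mathcal{H}$-difference along $(X^{m},p^{m},q^{m},P^{m})$ evaluated at $u^{m}$ versus $\bar u$, whereas you first identify the difference adjoints via the ansatz $p^{m}-\bar p=P\hat X$, $q^{m,i}-\bar q^{i}=-P\Delta\sigma^{i}$ (which checks out against the linear BSDE and uniqueness) and then exhibit the cancellation of the two $\sum_{i}(\Delta\sigma^{i})^{\intercal}P\Delta\sigma^{i}$ terms explicitly; the two computations are equivalent, and yours makes the role of the second-order adjoint more transparent. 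For the recursion, the paper proves a separate lemma (Proposition \ref{prop-sequence-sqrt-order}) by substituting $a_{m}=b_{m}m^{-1/2}$ and showing $b_{m}\le\max\{b_{1},A^{-1/2}\}$, while you telescope $a_{m}^{-2}$ using $(1-x)^{-2}\ge1+2x$ to get $a_{m}^{-2}\ge a_{1}^{-2}+(m-1)C^{-2}T^{-3}$ -- a cleaner argument that in fact yields the slightly sharper constant $\max\{a_{1},CT^{3/2}\}$ (versus $\max\{a_{1},\sqrt{2}\,CT^{3/2}\}$ from the paper's lemma with $A=(2C^{2}T^{3})^{-1}$). One caveat: you assert your endpoint bound delivers ``the constant $\tilde C$ of the statement,'' but it does not literally -- your $\tilde C$ is $\max\{a_{1},CT^{3/2}\}$, while the theorem prints $\max\{a_{1},2C^{-2}T^{-3}\}$; the printed constant appears to be a typo in the paper itself (it uses $A$ in place of $A^{-1/2}$, and moreover the paper misstates $A$ as $2C^{-2}T^{-3}$ rather than $\tfrac12 C^{-2}T^{-3}$), so the discrepancy does not affect the validity of the $m^{-1/2}$ rate, but you should state your constant honestly rather than claim agreement. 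Also be careful that the telescoping step tacitly assumes $a_{m}>0$; if $a_{m}=0$ for some $m$ the recursion forces $a_{k}=0$ for all $k\ge m$, so the bound holds trivially -- worth a sentence in a final write-up.
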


To prove Theorem \ref{thm-convergence-rate}, we need the following proposition.

\begin{proposition}
\label{prop-sequence-sqrt-order}
Let $\{a_{m}\}_{m\in \mathbb{N}_{+}}$ be the sequence of nonnegative numbers such that
\begin{equation}\label{sequence-difference-3-power}
a_{m+1}-a_{m} \leqslant -Aa_{m}^{3},
\end{equation}
where the constant $A>0$ is given. Then $a_{m}=O(m^{-\frac{1}{2}})$.
\end{proposition}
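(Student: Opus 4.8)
The plan is to convert the cubic difference inequality into a \emph{linear} lower bound for the reciprocal square of the sequence, since a rate of order $m^{-1/2}$ for $a_m$ is exactly a rate of order $m$ for $a_m^{-2}$. First I would record the elementary structural facts. Because $a_m\geq 0$, the right-hand side $-Aa_m^3$ is nonpositive, so $\{a_m\}$ is nonincreasing; being bounded below by $0$ it converges, and passing to the limit in the recursion forces the limit to be $0$. Moreover, since $a_{m+1}\geq 0$ and $a_{m+1}\leq a_m-Aa_m^3=a_m(1-Aa_m^2)$, whenever $a_m>0$ we must have $1-Aa_m^2\geq 0$, i.e. $Aa_m^2\leq 1$. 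If some term vanishes, say $a_{m_0}=0$, then the recursion gives $a_m=0$ for all $m\geq m_0$ and the bound $a_m=O(m^{-1/2})$ holds trivially; so without loss of generality I may assume $a_m>0$ for every $m$, in which case $Aa_m^2<1$ strictly (equality would force $a_{m+1}=0$).

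Next I would introduce $b_m:=a_m^{-2}$ and show $b_{m+1}-b_m\geq 2A$. Starting from $0<a_{m+1}\leq a_m(1-Aa_m^2)$ with $0\leq Aa_m^2<1$, squaring and taking reciprocals yields
\begin{equation}
b_{m+1}=\frac{1}{a_{m+1}^2}\geq \frac{1}{a_m^2(1-Aa_m^2)^2}=b_m\,(1-Aa_m^2)^{-2}.
\label{plan-recip}
\end{equation}
Here I would invoke the elementary inequality $(1-x)^{-2}\geq 1+2x$ for $0\leq x<1$ (immediate from the series $(1-x)^{-2}=1+2x+3x^2+\cdots$ with nonnegative terms). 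Applying it with $x=Aa_m^2$ turns \eqref{plan-recip} into $b_{m+1}\geq b_m(1+2Aa_m^2)=b_m+2Aa_m^2 b_m$, and since $a_m^2 b_m=1$ by definition, this is exactly $b_{m+1}\geq b_m+2A$.

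Finally I would telescope: summing $b_{k+1}-b_k\geq 2A$ over $k=1,\dots,m-1$ gives $b_m\geq b_1+2A(m-1)\geq 2A(m-1)$, hence $a_m^2\leq \tfrac{1}{2A(m-1)}$ and $a_m\leq \bigl(2A(m-1)\bigr)^{-1/2}$ for $m\geq 2$. Since $m-1\geq m/2$ for $m\geq 2$, this delivers $a_m\leq (Am)^{-1/2}$ for all $m\geq 2$, which is precisely $a_m=O(m^{-1/2})$ (the finitely many small indices being absorbed into the constant).

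I expect the only genuinely delicate point to be the bookkeeping that makes the substitution $b_m=a_m^{-2}$ legitimate: one must rule out a vanishing term and verify the strict bound $Aa_m^2<1$ so that the denominator $(1-Aa_m^2)^2$ in \eqref{plan-recip} is positive and the reciprocal step is valid. Once the right quantity $a_m^{-2}$ is identified and its increments are shown to be bounded below by the constant $2A$, the rest is a routine telescoping argument; the conceptual heart of the proof is recognizing that the cubic term $Aa_m^3$ is calibrated exactly so that the reciprocal square grows linearly.
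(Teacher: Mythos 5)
Your proof is correct, and it takes a genuinely different route from the paper's. The paper substitutes $a_{m}=b_{m}m^{-1/2}$ and shows the auxiliary sequence $b_{m}$ stays bounded: dividing the recursion (\ref{sequence-difference-3-power}) through by $b_{m}/\sqrt{m}$ gives $\frac{b_{m+1}}{b_{m}}\leq\left(1+\frac{1}{m}\right)\left(1-A\frac{b_{m}^{2}}{m}\right)$, whence $b_{m+1}<b_{m}$ whenever $Ab_{m}^{2}>1$, while otherwise $b_{m}\leq A^{-1/2}$, yielding $b_{m}\leq\max\{b_{1},A^{-1/2}\}$. You instead pass to the reciprocal square $b_{m}=a_{m}^{-2}$ and show its increments are bounded below by the constant $2A$, via $0<a_{m+1}\leq a_{m}(1-Aa_{m}^{2})$, the strict bound $Aa_{m}^{2}<1$, and the elementary inequality $(1-x)^{-2}\geq 1+2x$ on $[0,1)$; telescoping gives $b_{m}\geq 2A(m-1)$ and hence the explicit bound $a_{m}\leq(Am)^{-1/2}$ for $m\geq 2$. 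All your steps check out, including the bookkeeping the paper leaves implicit: you rule out vanishing terms (the paper's division by $b_{m}$ silently assumes $a_{m}>0$) and verify positivity of $1-Aa_{m}^{2}$ before taking reciprocals. Comparing the two: your argument is the standard device for recursions of the type $a_{m+1}\leq a_{m}-Aa_{m}^{p+1}$ (it generalizes immediately to $a_{m}=O(m^{-1/p})$), produces a clean explicit constant, and needs no case analysis; it also sidesteps a small soft spot in the paper's induction, since when $b_{m}\leq A^{-1/2}$ the paper's ratio bound still allows $b_{m+1}\leq\left(1+\frac{1}{m}\right)b_{m}$, so the stated bound $\max\{b_{1},A^{-1/2}\}$ really requires a harmless constant-factor adjustment (e.g.\ $\max\{b_{1},2A^{-1/2}\}$), whereas your monotone linear growth of $a_{m}^{-2}$ is airtight as written. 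What the paper's substitution buys in exchange is that it tracks directly the quantity $a_{m}\sqrt{m}$ one ultimately wants to bound, which is the form in which the constant is quoted in Theorem \ref{thm-convergence-rate}.
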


\begin{proof}
Let $a_{m}=b_{m}\cdot m^{-\frac{1}{2}}$ for some nonnegative sequence $\{b_{m}\}_{m\in \mathbb{N}_{+}}$.
Then it is enough to show that $b_{m}$ is bounded for all $m \in \mathbb{N}_{+}$. Through (\ref{sequence-difference-3-power}), we obtain
\[
a_{m}-a_{m+1}=\frac{b_{m}}{\sqrt{m}}\left(  1-\frac{b_{m+1}}{b_{m}}
\sqrt{\frac{m}{m+1}}\right)  \geqslant A\left(  \frac{b_{m}}{\sqrt{m}}\right)^{3},
\]
whence we can deduce
\[
1-\frac{b_{m+1}}{b_{m}}\sqrt{\frac{m}{m+1}}\geqslant A\frac{b_{m}^{2}}{m}.
\]
After some transformation, we can rewrite the inequality above as
\[
\frac{b_{m+1}}{b_{m}}\leqslant\sqrt{1+\frac{1}{m}}\left(  1-A\frac{b_{m}^{2}}%
{m}\right)  .
\]
Thus, we have
\[
\frac{b_{m+1}}{b_{m}}\leqslant\left(  1+\frac{1}{m}\right)  \left(  1-A\frac
{b_{m}^{2}}{m}\right)  =1+\frac{1}{m}\left(  1-Ab_{m}^{2}\right)
-A\frac{b_{m}^{2}}{m^{2}}.
\]
If $1-Ab_{m}^{2}<0$, we have
\[
\frac{b_{m+1}}{b_{m}}\leq1+\frac{1}{m}\left(  1-Ab_{m}^{2}\right)
-A\frac{b_{m}^{2}}{m^{2}}<1.
\]
Hence $b_{m+1}<b_{m}$. Otherwise, we have $b_{m} \leqslant A^{-\frac{1}{2}}$.
Consequently, we conclude that $b_{m} \leqslant \max \{b_{1}, A^{-\frac{1}{2}} \}$ for all $m \in \mathbb{N}_{+}$.
The proof is complete.
\end{proof}

\begin{proof}[Proof of Theorem \ref{thm-convergence-rate}]
Subtracting $J(\overline{u}(\cdot))$ from $J(u^{m}(\cdot))$ yields
\begin{align}
\label{Jum-Jbaru-1}
& J(u^{m}(\cdot))-J(\overline{u}(\cdot))\\
\nonumber = & \ \mathbb{E}\left[  \Phi(X^{m}(T))-\Phi(\overline{X}(T))+\int_{0}^{T}\left[ f(t,X^{m}(t),u^{m}(t))-f(t,\overline{X}(t),\overline{u}(t))\right]  \mathrm{d}t\right]  \\
\nonumber = & \ \mathbb{E}\left[  \left(  X^{m}(T)\right)  ^{\intercal}\Gamma(X^{m}(T)-\overline{X}(T))-\frac{1}{2}\left(  X^{m}(T)-\overline{X}(T)\right)^{\intercal}\Gamma(X^{m}(T)-\overline{X}(T))\right.  \\
\nonumber & + \int_{0}^{T}\left[  \left(  X^{m}(t)\right)  ^{\intercal}G(t)(X^{m}(t)-\overline{X}(t))-\frac{1}{2}\left(  X^{m}(t)-\overline{X}(t)\right)  ^{\intercal}G(t)(X^{m}(t)-\overline{X}(t))\right]  \mathrm{d}t\\
\nonumber & + \left.  \int_{0}^{T}\left[  g(t,u^{m}(t))-g(t,\overline{u}(t))\right] \mathrm{d}t\right].
\end{align}
Observe that, for any $u(\cdot)\in \mathcal{U}[0,T]$, (\ref{1st-adj-eq}) becomes
\[
p^{u}(t)=\Gamma X^{u}(T)+\int_{t}^{T}\left(  b_{1}^{\intercal}(s)p^{u}%
(s)+G(s)X^{u}(s)\right) \mathrm{d}s-\sum_{i=1}^{d}\int_{t}^{T}q^{u,i}(s)\mathrm{d}W^{i}%
(s),\text{ \ }t\in\lbrack0,T],
\]
and (\ref{2nd-adj-eq}) becomes
\[
P(t)=\Gamma+\int_{t}^{T}\left[  b_{1}^{\intercal}(s)P(s)+P^{\intercal}%
(s)b_{1}(s)+G(s)\right] \mathrm{d}s,\text{ \ }t\in\lbrack0,T].
\]
Then, applying It\^{o}'s lemma yields
\begin{align}
\label{Jum-Jbaru-2}
0 \leqslant & J(u^{m}(\cdot))-J(\overline{u}(\cdot)) \\
\nonumber = & \ \mathbb{E}\left[  \int_{0}^{T}\left[  \left\langle q^{m}(t),\sigma(t,u^{m}(t))-\sigma(t,\overline{u}(t))\right\rangle +g(t,u^{m}(t))-g(t,\overline{u}(t))\right]  \mathrm{d}t\right.  \\
\nonumber & -\left.  \frac{1}{2}\sum\limits_{i=1}^{d}\int_{0}^{T}\left(  \sigma^{i}(t,u^{m}(t))-\sigma^{i}(t,\overline{u}(t))\right)  ^{\intercal}P(t)(\sigma^{i}(t,u^{m}(t))-\sigma^{i}(t,\overline{u}(t)))\mathrm{d}t\right]  \\
\nonumber = & \ \mathbb{E}\left[  \int_{0}^{T}\left[  \mathcal{H} (t,X^{m}(t),p^{m}(t),q^{m}(t),P(t),u^{m}(t),u^{m}(t)) \right. \right. \\
\nonumber & - \left. \left. \mathcal{H}(t,X^{m}(t),p^{m}(t),q^{m}(t),P(t),\overline{u}(t),u^{m}(t)) \right]  \mathrm{d}t \right]  \\
\nonumber \leqslant & \ \mathbb{E}\left[  \int_{0}^{T}\left[  \mathcal{H}(t,X^{m}(t),p^{m}(t),q^{m}(t),P(t),u^{m}(t),u^{m}(t)) \right. \right. \\
\nonumber & - \left. \left. \mathcal{H}(t,X^{m}(t),p^{m}(t),q^{m}(t),P(t),v^{m}(t),u^{m}(t)) \right]  \mathrm{d}t \right]  \\
\nonumber = & \ -\mu(u^{m}(\cdot)).
\end{align}
For each $m \in \mathbb{N}_{+}$, define the nonnegative sequence by $a_{m}=J(u^{m}(\cdot))-J(\overline{u}(\cdot))$. Then, as $-\mu(u^{m}(\cdot)) \geqslant 0$, it follows from (\ref{sequence-cost-decrease}) and (\ref{Jum-Jbaru-2}) that
\[
a_{m+1}-a_{m}
= \left[  J(u^{m+1}(\cdot))-J(\overline{u}(\cdot))\right]  -\left[  J(u^{m}%
(\cdot))-J(\overline{u}(\cdot))\right]
\leqslant -\frac{a_{m}^{3}}{2C^{2}T^{3}},
\]
which verifies Proposition \ref{prop-sequence-sqrt-order} with $A=2C^{-2}T^{-3}$.
Hence we have 
$$
a_{m} \leqslant \max \left\{ a_{1}, 2C^{-2}T^{-3} \right\} m^{-\frac{1}{2}}, \quad \forall m \in \mathbb{N}_{+}.
$$
The proof is complete.
\end{proof}

The following simple example shows that, when $U$ is non-convex, the modified MSA algorithms --- although very powerful 
--- are rather sensitive to the choice of the modification parameter $\rho \geqslant 0$ appearing in (\ref{intro-extended-H-mini}). 
(For more details about modified MSA algorithms, for instance, please refer to \cite{BDL-MSA-2020}, Algorithm 1, or, \cite{Sethi-Siska-MSA}, Algorithm 1.)
Compared with the modified MSA algorithms, the introduction of the second-order adjoint equation (\ref{2nd-adj-eq}) in our Algorithm \ref{algorithm-MSA} yields more robust performance.

\begin{example}
  \label{sec-3-example}
  Let $n=d=k=T=1$, $x_{0} = 0$, $G=\Gamma=2$, $b_{1} = b_{2} = 0$, $\sigma(t,u) = u$, $g(t,u) = -\frac{1}{2}u^{2}$, and $U = \{ 0,1 \}$.
By simple calculation, it can be verified that 
\[
J(u(\cdot)) = \mathbb{E} \left[ \int_{0}^{1} \left( \frac{3}{2} - t \right) u(t)^{2} \mathrm{d}t \right], \quad \forall u(\cdot) \in \mathcal{U}[0,1],
\]
and hence $\overline{u}(\cdot) \equiv 0$ is the unique optimal control (see \cite{YongZhou}, Example 3.1).
The usual Hamiltonian is given by $H(t,x,p,q,v) = qv + x^{2} - v^{2}/2$ and then we can write
\[
\mathcal{H}(t,x,p,q,P,v,u) = qv + x^{2} - \frac{1}{2} v^{2} + \frac{1}{2} P v(v - 2u).
\]
For any $u(\cdot) \in \mathcal{U}[0,1]$, $\left( p^{u}(\cdot), q^{u}(\cdot) \right)$ satisfies
\[
p^{u}(t)= 2 X^{u}(1) + 2 \int_{t}^{1}  X^{u}(s) \mathrm{d}s - \int_{t}^{1}q^{u}(s)\mathrm{d}W(s), \quad t \in [0,1],
\]
and $P(t) = 4 - 2t$, $t \in [0,1]$. Using It\^{o}'s formula to $P(t)X^{u}(t)$ on $t \in [0,1]$, one can demonstrate that $p^{u}(t) = P(t)X^{u}(t)$, $q^{u}(t) = P(t)u(t)$,
and then we obtain
\begin{equation}
  \label{example-H-function}
\mathcal{H}(t,X^{u}(t),p^{u}(t),q^{u}(t),P(t),v,u(t)) = \left( \frac{3}{2} - t \right)v^{2} + \left\vert X^{u}(t) \right\vert ^{2},
\quad \forall (t,v) \in [0,1] \times U.
\end{equation}
For any $t \in [0,1]$, it is easy to check that $\Delta_{u} \mathcal{H}(t) = (3/2 - t) [v(t)^{2} - u(t)^{2}] < 0$ if and only if $u(t) = 1$, $v(t) = 0$.
Thus, the set $\mathcal{T}_{u} = \{ t \in [0,1]: \mathbb{E} [ \Delta_{u} \mathcal{H}(t) ] < 0 \} = \{ t \in [0,1]: \exists \omega \in \Omega, \ u(t,\omega) = 1 \}$
and then $u(t) = 1_{\mathcal{T}_{u}}(t)$, $t \in [0,1]$.

Choose $u^{0}(\cdot) \in \mathcal{U}[0,1]$ arbitrarily such that $\mathrm{Leb}_{[0,1]}(\mathcal{T}_{u^{0}}) > 0$.
On the one hand, according to Algorithm \ref{algorithm-MSA}, we have $v^{0}(\cdot) = 0$, $\tau_{1}^{1} = 1/2$, $\varepsilon_{1} = 1/2$, $E_{\tau_{1}^{1} \varepsilon_{1}} = [0,1]$, $u_{\tau_{1}^{1} \varepsilon_{1}}^{0}(\cdot) = v^{0}(\cdot)$, and 
\[
\mu(u^{0}(\cdot)) = \mathbb{E} \left[ \int_{0}^{1} \Delta_{u^{0}} \mathcal{H}(t) \mathrm{d}t \right] = - \int_{\mathcal{T}_{u}} \left( \frac{3}{2} - t \right) \mathrm{d}t = - J(u^{0}(\cdot)).
\]
(\ref{ineq-tau-decrease-exist}) holds naturally and (\ref{ineq-um-next-construct}) is verified as
\[
J(u_{\tau_{1}^{1} \varepsilon_{1}}^{0}(\cdot)) - J(u^{0}(\cdot)) = - J(u^{0}(\cdot)) = \mu(u^{0}(\cdot)) < \frac{\mu(u^{0}(\cdot))}{2}.
\]
Due to Step 9 in Algorithm \ref{algorithm-MSA}, we assign $u_{\tau_{1}^{1} \varepsilon_{1}}^{0}(\cdot)$ to $u^{1}(\cdot)$ and then $u^{1}(\cdot) = 0 = \overline{u}(\cdot)$, which means that we attain the global minimum after one iteration.

On the other hand, thanks to the identity $q^{u}(t) = P(t)u(t)$, the augmented Hamiltonian (\ref{intro-extended-H-mini}) now reads
\begin{equation}
\label{example-aug-Hamilton}
\left\{
\begin{array}[c]{lr}
  \frac{\rho - 1}{2} \left[ v + \frac{P(t) - \rho}{\rho - 1} u(t) \right]^{2} - \frac{1}{2} \left[ \frac{ \left( P(t) - \rho \right)^{2} }{\rho - 1} - \rho \right] u(t)^{2} + X^{u}(t) ^{2}, & \rho \neq 1, \\
  \ & \ \\
  \left[ P(t) - 1 \right] u(t)v + X^{u}(t) ^{2} + \frac{\rho}{2} u(t)^{2}, & \rho = 1.
\end{array}
\right.
\end{equation}
With the same initial guess $u^{0}(\cdot)$ as above, the efficiency of the modified MSA depends on the choice of $\rho \geqslant 0$, which is demonstrated as follows.
\begin{itemize}
  \item $0 \leqslant \rho < 1$. We have $v^{0}(t) = 1_{[0,1] \setminus \mathcal{T}_{u^{0}}}(t)$, $t \in [0,1]$. The sign of
  \[
  J(v^{0}(\cdot)) - J(u^{0}(\cdot)) = 1 - 2 J(u^{0}(\cdot))
  \]
  relies on the value of $J(u^{0}(\cdot))$, so the modified MSA is inapplicable to this case.

  \vspace{0.1cm}
  
  \item $1 \leqslant \rho < 3$. We have $v^{0}(\cdot) = 0 = \overline{u}(\cdot)$, whence we assign $v^{0}(\cdot)$ to $u^{1}(\cdot)$.
  The global minimum is attained after one iteration.
  
  \vspace{0.1cm}

  \item $3 \leqslant \rho < 7$. We have $v^{m}(t) = 1_{\mathcal{T}_{u^{m}} \cap [(7 - \rho)/4,1]} (t)$ for each $t \in [0,1]$ and any $m \in \mathbb{N}$,
  and assign $v^{m}(\cdot)$ to $u^{m+1}(\cdot)$ as long as 
  \[
  0 \leqslant J(u^{m}(\cdot)) - J(v^{m}(\cdot)) = \int_{\mathcal{T}_{u^{m}} \setminus \mathcal{T}_{u^{m+1}}} \left( \frac{3}{2} - t \right) \mathrm{d}t < \delta
  \]
  with the set $\mathcal{T}_{u^{m+1}} := \mathcal{T}_{u^{m}} \cap [(7 - \rho)/4,1]$ and any given permissible threshold $\delta > 0$.  
  
  \vspace{0.1cm}

  \item $\rho \geqslant 7$. We have $v^{0}(\cdot) = u^{0}(\cdot)$. In this case, we cannot use the modified MSA to reduce the value of $J$ once $u^{0}(\cdot)$ is given.
  Therefore, $u^{0}(\cdot)$ can be viewed as a local minimum over the admissible controls belonging to $\{ u(\cdot) \in \mathcal{U}[0,1]: J(u(\cdot)) \geqslant J(u^{0}(\cdot)) \}$,
  which is nonempty since it contains the admissible control $u(\cdot) \equiv 1$.
\end{itemize}
It is observed that an excessively small $\rho$ can render the modified MSA inoperative, whereas an excessively large $\rho$ may drive the modified MSA into a local minimum.

\end{example}

\bibliographystyle{MSA_arXiv.bst}
\bibliography{MSA_arXiv.bib}

\begin{thebibliography}{10}
\providecommand{\url}[1]{\texttt{#1}}
\providecommand{\urlprefix}{Available at: }
\providecommand{\eprint}[2][]{\url{#2}}

\bibitem{Carmona}
Carmona, R. \emph{Lectures on BSDEs, Stochastic Control, and Stochastic Differential Games with Financial Applications}, SIAM, Philadelphia, PA, 2016.

\bibitem{Carmona-2021}
Carmona, R.; {M. Lauri\`{e}re}. Convergence analysis of machine learning algorithm for the numerical solution of mean field control and games: {II}---the finite horizon case. \emph{Ann. Appl. Probab.} \textbf{32} (2022), 4065--4105.

\bibitem{AAL2}
Chernousko, F.~L.; Lyubushin, A.~A. Method of successive approximations for solution of optimal control problems. \emph{Optimal Control Appl. Methods} \textbf{3} (1982), 101--104.

\bibitem{Dong-2024}
Dong, Y. Randomized optimal stopping problem in continuous time and reinforcement learning algorithm. \emph{SIAM J. Control Optim.} \textbf{62} (2024), 1590--1614.

\bibitem{Duffie-Epstein-1992}
Duffie, D.; Epstein, L. Stochastic differential utility. \emph{Econometrica} \textbf{60} (1992), 353--394.

\bibitem{Eweinan-Hanjiequn-2017}
E, W.; Han, J.; Jentzen, A. Deep learning-based numerical methods for high-dimensional parabolic partial differential equations and backward stochastic differential equations. \emph{Commun. Math. Stat.} \textbf{5} (2017), 349--380.

\bibitem{OHL-JL-1996}
{Hernandez-Lerm\'{a}}, O.; Lasserre, J. \emph{Discrete-Time Markov Control Processes}, Springer New York, 1996.

\bibitem{Hu-Kazeykina-Ren-2019}
Hu, K.; Kazeykina, A.; Ren, Z.: Mean-field {Langevin} system, optimal control and deep neural networks, Available at arXiv:1909.07278. 2019.

\bibitem{Ji-Xu-MSA}
Ji, S.; Xu, R. A modified method of successive approximations for stochastic recursive optimal control problems. \emph{SIAM J. Control Optim.} \textbf{60} (2022), 2759--2786.

\bibitem{Karoui97}
Karoui, N.~E.; Peng, S.; Quenez, M.~C. Backward stochastic differential equations in finance. \emph{Math. Finance} \textbf{7} (1997), 1--71.

\bibitem{BDL-Howard-2020}
Kerimkulov, B.; {\v{S}i\v{s}ka}, D.; {\L. Szpruch}. Exponential convergence and stability of {Howard's} policy improvement algorithm for controlled diffusions. \emph{SIAM J. Control Optim.} \textbf{58} (2020), 1314--1340.

\bibitem{BDL-MSA-2020}
Kerimkulov, B.; {\v{S}i\v{s}ka}, D.; {\L. Szpruch}. A modified {MSA} for stochastic control problems. \emph{Appl. Math. Optim.} \textbf{84} (2021), 3417--3436.

\bibitem{IA-Krylov1}
Krylov, I.~A.; Chernousko, F.~L. On the method of successive approximations for solution of optimal control problems. \emph{USSR Comput. Math. Math. Phys.} \textbf{2} (1962), 1371--1382.

\bibitem{IA-Krylov2}
Krylov, I.~A.; Chernousko, F.~L. Algorithm of the method of successive approximations for optimal control problems. \emph{USSR Comput. Math. Math. Phys.} \textbf{12} (1972), 15--38.

\bibitem{WeinanE-2018}
Li, Q.; Chen, L.; Tai, C.; E, W. Maximum principle based algorithms for deep learning. \emph{J. Mach. Learn. Res.} \textbf{18} (2018), 1--29.

\bibitem{AAL1}
Lyubushin, A.~A. Modifications and convergence of method of successive approximations for optimal control problems. \emph{USSR Comput. Math. Math. Phys.} \textbf{19} (1979), 53--61.

\bibitem{AAL-1982}
Lyubushin, A.~A. Modifications of method of successive approximations for solving optimal control problems. \emph{USSR Comput. Math. Math. Phys.} \textbf{22} (1982), 29--34.

\bibitem{Peng90}
Peng, S. A general stochastic maximum principle for optimal control problems. \emph{SIAM J. Control Optim.} \textbf{28} (1990), 966--979.

\bibitem{Reisinger-Stockinger-Zhang-2023}
Reisinger, C.; Stockinger, W.; Zhang, Y. Linear convergence of a policy gradient method for some finite horizon continuous time control problems. \emph{SIAM J. Control Optim.} \textbf{61} (2023), 3526--3558.

\bibitem{Sethi-Siska-MSA}
Sethi, D.; {\v{S}i\v{s}ka}, D. The modified {MSA}, a gradient flow and convergence. \emph{Ann. Appl. Probab.} \textbf{34} (2024), 4455--4492.

\bibitem{Siska-Szpruch-SICON}
{\v{S}i\v{s}ka}, D.; {\L. Szpruch}. Gradient flows for regularized stochastic control problems. \emph{SIAM J. Control Optim.} \textbf{62} (2024), 2036--2070.

\bibitem{YongZhou}
Yong, J.; Zhou, X. \emph{Stochastic Controls: Hamiltonian Systems and HJB Equations}, Springer New York, 1999.

\bibitem{Zhou-XY}
Zhou, X. Stochastic near-optimal controls: necessary and sufficient conditions for near-optimality. \emph{SIAM J. Control Optim.} \textbf{36} (1998), 929--947.

\end{thebibliography}

\end{document}